\newtheorem{theorem}{Theorem}[section]
\newtheorem{lemma}[theorem]{Lemma}
\newtheorem{corollary}[theorem]{Corollary}
\theoremstyle{definition}
\newcommand{\bR}{\mathbb R}
\newcommand{\bZ}{\mathbb Z}
\newcommand{\cL}{\mathcal{L}}
\newcommand{\cM}{\mathcal{M}}
\newcommand{\cU}{\mathcal{U}}
\newcommand{\princext}[3]{\widehat{\PG}(#1,#2;#3)}
\DeclareMathOperator{\si}{si}
\DeclareMathOperator{\cl}{cl}
\DeclareMathOperator{\PG}{PG}
\DeclareMathOperator{\GF}{GF}
\DeclareMathOperator{\AG}{AG}
\newcommand{\elem}{\varepsilon}
\newcommand{\del}{\!\setminus\!}
\newcommand{\con}{/}
\begin{document}
\sloppy
\author{Peter Nelson}
\address{Department of Combinatorics and Optimization,
University of Waterloo, Waterloo, Canada}
\thanks{ This research was partially supported by a grant from the
Office of Naval Research [N00014-12-1-0031].}
\title[Dense matroids]{Matroids denser than a projective geometry}
\begin{abstract}
	The {\em growth-rate function} for a minor-closed class $\cM$ of matroids is the
function $h$ where, for each non-negative integer $r$,
$h(r)$ is the maximum number of elements of a simple matroid in $\cM$ 
with rank at most $r$. The Growth-Rate Theorem of Geelen, Kabell, Kung, and Whittle
shows, essentially, that  the growth-rate function is always either linear, quadratic, exponential with some prime power $q$ as the base, or infinite.
Morover, if the growth-rate function is exponential with base $q$, then the class contains all $\GF(q)$-representable matroids, and so $h(r)\ge \frac{q^r-1}{q-1}$ for each $r$. We characterise the classes that satisfy $h(r) = \frac{q^r-1}{q-1}$ 
for all sufficiently large $r$. As a consequence, we determine the eventual value of the growth rate function for most classes defined by excluding lines, free spikes and/or free swirls. 

\end{abstract}
\subjclass{05B35}
\keywords{matroids, growth rates}
\date{\today}
\maketitle
\section{Introduction}
	The \emph{principal extension} of a flat $F$ in a matroid $M$ by an element $e \notin E(M)$ is the matroid $M'$ such that $M = M' \del e$ and $F$ is the unique minimal flat of $M$ for which $e \in \cl_{M'}(F)$. We write $\princext{n-1}{q}{k}$ for the principal extension of a rank-$k$ flat in $\PG(n-1,q)$. We prove the following:
	
	\begin{theorem}\label{main}
		Let $q$ be a prime power and let $\ell \ge 2$ and $n \ge 2$ be integers. If $M$ is a simple  matroid with $|M| > |\PG(r(M)-1,q)|$ and $r(M)$ is sufficiently large, then $M$ has a minor isomorphic to $U_{2,\ell+2}$, $\princext{n-1}{q}{2},\princext{n-1}{q}{n}$, or $\PG(n-1,q')$ for some $q' > q$. 
	\end{theorem}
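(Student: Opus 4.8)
The plan is to prove the contrapositive. Let $\cM$ be the minor-closed class of matroids with no minor isomorphic to $U_{2,\ell+2}$, $\princext{n-1}{q}{2}$, $\princext{n-1}{q}{n}$, or $\PG(n-1,q')$ for any prime power $q'>q$; it suffices to show $h_{\cM}(r)\le|\PG(r-1,q)|$ for all large $r$, where $h_{\cM}$ is the growth-rate function of $\cM$. If $\ell<q$ then $\PG(2,q)$ has a $U_{2,\ell+2}$-restriction, so no member of $\cM$ of rank at least three has a $\PG(2,q)$-minor; as $\cM$ also excludes a line, the Growth-Rate Theorem gives that $h_{\cM}$ is polynomial or exponential with base less than $q$, so $h_{\cM}(r)<|\PG(r-1,q)|$ for large $r$ and the claim is vacuous. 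So assume $\ell\ge q$. None of $\princext{n-1}{q}{2}$, $\princext{n-1}{q}{n}$, $\PG(n-1,q')$ is $\GF(q)$-representable — the first has a $(q+2)$-point line, and each of the others has more than $\frac{q^n-1}{q-1}$ elements for its rank — so $\PG(N-1,q)\in\cM$ for every $N$; hence $h_{\cM}(r)\ge|\PG(r-1,q)|$, and together with the Growth-Rate Theorem and the fact that $\PG(n-1,q')\notin\cM$ for $q'>q$ this forces $\cM$ to be exponentially dense with base exactly $q$.

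Next I would apply the Projective Geometry Theorem of Geelen, Kung and Whittle (as refined by Geelen and Nelson): since $\cM$ is exponentially dense with base $q$ and has no projective geometry over a larger field, there is a function $\rho$ with the property that every member of $\cM$ of rank at least $\rho(m)$ has a $\PG(m-1,q)$-minor. Now assume the claim fails, so there are simple members of $\cM$ of arbitrarily large rank $r$ with more than $|\PG(r-1,q)|$ elements; fix such an $M$ with $r(M)\ge\rho(m)$ for some $m$ that is large relative to $n$ (say $m\ge 2n$). Then $M$ has a $\PG(m-1,q)$-minor, and a routine argument produces an independent set $Z$ with $r(M/Z)=m$ such that $\PG(m-1,q)$ occurs as a spanning restriction $N$ of $M^\ast:=M/Z$. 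One then wants the density hypothesis to force $\si(M^\ast)$ to have strictly more elements than $N$; granting this for the moment, $\si(M^\ast)$ has a point not represented in $N$, and choosing an element $e$ of $M^\ast$ representing it gives a non-loop $e\notin E(N)$ that is not parallel in $M^\ast$ to any element of $N$, with $e\in\cl_{M^\ast}(E(N))$ because $N$ spans $M^\ast$.

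The decisive point is that the flat lattice of $\PG(m-1,q)$ is \emph{modular}, so any modular cut of $N$ is closed under arbitrary meets and is therefore the principal filter of a single flat. Hence the single-element extension $M^\ast|(E(N)\cup\{e\})$ of $N$ is automatically a \emph{principal} extension: there is a unique minimal flat $F$ of $N$ with $e\in\cl_{M^\ast}(F)$, and $r_N(F)\ge 2$ since $e$ is a non-loop not parallel into $N$. Thus $\princext{m-1}{q}{k}$ with $k=r_N(F)$ is a minor of $M$, and a short minor computation completes the contradiction: if $k\le n$, contract a rank-$(k-2)$ subflat of $F$ so that $F$ becomes a line with $e$ still principal on it, and then contract a flat skew to that line to reduce the ambient rank to $n$, yielding $\princext{n-1}{q}{2}$; if $k\ge n$, contract instead a rank-$(k-n)$ subflat of $F$ together with a complementary flat of rank $m-k$, yielding $\princext{n-1}{q}{n}$. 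In either case $M$ has a forbidden minor.

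The step I expect to be the main obstacle is the one I deferred: showing that the density surplus of $M$ — which may be only a single element beyond $|\PG(r(M)-1,q)|$ — survives the contraction by $Z$ and the simplification, so that $\si(M^\ast)$ really is larger than $N$ while $N$ still has rank $m\ge 2n$. A crude bound on $|\si(M^\ast)|$ is useless here, since contracting can create arbitrarily many loops and parallel elements even though $M$ is simple. Making this quantitative seems to require the structure theory of exponentially dense classes — in particular the results of Geelen and Nelson presenting dense members of a base-$q$ exponentially dense class as bounded-rank perturbations of $\GF(q)$-representable matroids — to choose the minor $M^\ast$ and the element $e$ with all the relevant parameters controlled simultaneously.
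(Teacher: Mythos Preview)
The gap you identify is real and is the whole difficulty; the modularity argument and the minor computation in your last paragraph are correct and match the paper. The paper fills the gap by a different mechanism than the perturbation/structure theory you suggest. Rather than finding $\PG(m-1,q)$ as a \emph{minor} and then trying to preserve a one-element density surplus through the associated contraction, the paper applies the matroidal density Hales-Jewett theorem to obtain $\AG(2m,q)$ as a \emph{restriction} (after first passing to a $q$-dense restriction whose cocircuits all have nearly full rank). The accompanying elementary observation is that if $M$ is $q$-dense and $e\in E(M)$, then either $M/e$ is again $q$-dense or some $(q+2)$-point line of $M$ passes through $e$. One may therefore contract elements outside the span of the affine restriction one at a time, maintaining the invariant ``$q$-dense or contains a $U_{2,q+2}$-restriction''; a short minimality argument using the cocircuit condition shows this process terminates only once the affine geometry is spanning. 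Contracting any point of the affine geometry then produces $\PG(2m-1,q)$ as a spanning restriction of a matroid that is still $q$-dense or has a $U_{2,q+2}$-restriction, hence is non-$\GF(q)$-representable, and your final paragraph finishes.

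In short, the missing idea is density Hales-Jewett (giving a large geometry as a restriction rather than a minor) together with the one-line lemma that tracks $q$-density through single-element contractions; with these in hand, no appeal to bounded-rank perturbations is needed.
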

	
	This result first appeared in [\ref{nthesis}] and essentially follows from material in [\ref{line}], but our proof is much shorter due to the use of the matroidal density Hales-Jewett theorem [\ref{dhjpaper}]. 
	
	Theorem~\ref{main} has several corollaries related to the growth rate functions of minor-closed classes. For a nonempty minor-closed class of matroids $\cM$, the \emph{growth rate function} $h_{\cM}(n): \bZ_0^+ \to \bZ \cup \{\infty\}$ is the function whose value at each integer $n$ is the maximum number of elements in a simple matroid $M \in \cM$ with $r(M) \le n$. Clearly $h_{\cM}(n) = \infty$ for all $n \ge 2$ if $\cM$ contains all simple rank-$2$ matroids; in all other cases, growth rate functions are quite tightly controlled by a theorem of Geelen, Kabell, Kung and Whittle: 
	
	\begin{theorem}[Growth rate theorem]
		Let $\cM$ be a nonempty minor-closed class of matroids not containing all simple rank-$2$ matroids. There exists $c \in \bR$ such that either:
		\begin{enumerate}
			\item $h_{\cM}(n) \le cn$ for all $n$, 
			\item $\binom{n+1}{2} \le h_{\cM}(n) \le cn^2$ for all $n$, and $\cM$ contains all graphic matroids, or
			\item\label{exp} there is a prime power $q$ so that $\frac{q^n-1}{q-1} \le h_{\cM}(n) \le cq^n$ for all $n$, and $\cM$ contains all $\GF(q)$-representable matroids. 
		\end{enumerate}
	\end{theorem}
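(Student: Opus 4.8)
\emph{Strategy.} The plan is to reduce to classes with a bounded line size, cap $h_{\cM}$ exponentially via Kung's theorem, read off which of the three regimes occurs from a ``density forces substructure'' theorem, and then match the upper bound in each regime. Since every simple rank-$2$ matroid is isomorphic to some $U_{2,m}$ and $\cM$ is minor-closed but omits some of them, there is a least integer $\ell\ge 2$ with $U_{2,\ell+2}\notin\cM$, and then no member of $\cM$ has a $U_{2,\ell+2}$-minor. By Kung's theorem every simple $M\in\cM$ satisfies $\varepsilon(M)\le\frac{\ell^{r(M)}-1}{\ell-1}$, so $h_{\cM}$ is finite-valued and bounded by an exponential with base $\ell$. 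Moreover, whenever $\PG(n-1,q')\in\cM$ for some $n\ge 2$, its restriction $U_{2,q'+1}$ forces $q'\le\ell$; in particular only finitely many prime powers $q'$ can have $\PG(n-1,q')\in\cM$ for every $n$.

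\emph{Determining the regime.} If $h_{\cM}(n)\le cn$ for some constant $c$ and all $n$, we are in case~(1). Otherwise $h_{\cM}$ is not $O(n)$, so (since $h_{\cM}$ is finite-valued) $h_{\cM}(n)/n\to\infty$ along some sequence of ranks, and hence for each $k$ we may choose a simple $M_k\in\cM$ of large rank with $\varepsilon(M_k)$ exceeding $\alpha_k\, r(M_k)$, where $\alpha_k$ is the constant in the theorem of Geelen and Kabell that such a matroid has a $U_{2,k}$-, an $M(K_k)$-, or a $\PG(k-1,q)$-minor. For $k\ge\ell+2$ the first option is unavailable, and any $\PG(k-1,q_k)$-minor has $q_k\le\ell$ by the previous paragraph; as the eligible $q_k$ form a finite set, either a single prime power $q$ has $\PG(k-1,q)\in\cM$ for infinitely many — hence, by minor-closure, all — $k$, or $M(K_k)\in\cM$ for all large $k$. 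In the first case $\cM$ contains every $\GF(q)$-representable matroid (each being a minor of a sufficiently large $\PG(n-1,q)$), so $h_{\cM}(n)\ge\frac{q^n-1}{q-1}$; in the second $\cM$ contains every graphic matroid (each being a minor of a sufficiently large $M(K_k)$), so $h_{\cM}(n)\ge\binom{n+1}{2}$.

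\emph{Matching the upper bounds.} Let $Q$ be the (finite) set of prime powers $q$ with $\PG(n-1,q)\in\cM$ for every $n$. If $Q=\varnothing$, the previous step leaves only case~(1) or the ``contains all graphic matroids'' situation; in the latter, $Q=\varnothing$ says exactly that $\cM$ has projective-geometry minors of only bounded rank, so the quadratic form of the Geelen--Kabell analysis gives $h_{\cM}(n)=O(n^2)$ and we are in case~(2). If $Q\ne\varnothing$, put $q=\max Q$, so $h_{\cM}(n)\ge\frac{q^n-1}{q-1}$, and it remains to show $h_{\cM}(n)\le cq^n$. Here I would invoke the refinement of Kung's bound — whose tight form, with threshold $|\PG(r-1,q)|$ in place of $cq^r$, is Theorem~\ref{main} — in the form: a minor-closed class with no $U_{2,\ell+2}$-minor that contains $\PG(n-1,q)$ for all $n$ but whose growth rate is not $O(q^n)$ must contain $\PG(n-1,q')$ for all $n$ for some prime power $q'>q$. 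If $h_{\cM}$ were not $O(q^n)$ this would, with our choice $q=\max Q$, force some $q'>q$ into $Q$, a contradiction; hence $h_{\cM}(n)\le cq^n$ and we are in case~(3). In each case the single constant $c$ then exists after absorbing the finitely many small ranks.

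\emph{The main obstacle.} Steps one and two and the bookkeeping of step three are routine; all the weight rests on the density-to-substructure inputs, and the genuine obstacle is the last one: density past $cq^r$, for the \emph{correct} base $q$, must force a projective geometry over a strictly larger field — equivalently, the growth-rate function cannot sit strictly between two consecutive prime-power exponentials. This is precisely where the vertical-connectivity reductions and the inductive ``cover a spanning set by lower-rank flats'' arguments of Geelen and Kabell (and, in tight form, of Theorem~\ref{main}) do their work; by comparison the crude exponential cap from Kung's theorem and the split into the linear, polynomial, and exponential cases are soft.
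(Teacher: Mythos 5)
The paper does not prove this statement at all: the Growth Rate Theorem is quoted as background, attributed to Geelen, Kabell, Kung and Whittle and cited from [\ref{gkw09}], and the paper's own results (Theorems~\ref{main} and~\ref{main2}) are refinements that take it as given. So there is no in-paper proof to compare against. Judged on its own terms, your outline is the standard derivation: Kung's bound supplies finiteness and the exponential cap with base $\ell$; the superlinear-density theorem (this is the main theorem of [\ref{gkw09}] rather than of Geelen--Kabell, but the attribution is immaterial) forces $M(K_k)$- or bounded-field projective-geometry minors once $h_{\cM}$ is not $O(n)$; the quadratic and exponential density theorems of Geelen--Kabell (``Projective geometries in dense matroids'') then pin the upper bound in cases (2) and (3), with the pigeonhole over the finitely many prime powers at most $\ell$ and minor-closure doing the bookkeeping. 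Granting those three black boxes, your assembly is correct, and of course the entire mathematical weight lies inside them, as you acknowledge.

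One inaccuracy worth fixing: you describe Theorem~\ref{main} of this paper as a ``tight form'' of the input you need for case (3), namely ``not $O(q^n)$ forces $\PG(n-1,q')$ for some $q'>q$.'' It is not. Theorem~\ref{main} has a sharper density threshold but a strictly weaker conclusion: its outcomes include the principal extensions $\princext{n-1}{q}{2}$ and $\princext{n-1}{q}{n}$, and a class containing $\linemc(q)$ or $\freemc(q)$ still has growth rate $O(q^n)$, so exceeding the threshold $\frac{q^r-1}{q-1}$ yields no contradiction with ``$h_{\cM}$ not $O(q^n)$.'' Hence Theorem~\ref{main} cannot substitute for the Geelen--Kabell exponential theorem (density at least $\alpha q^r$ for an arbitrarily large constant $\alpha$ forcing $\PG(n-1,q')$ with $q'>q$), and invoking it here would in any case be circular in spirit, since its proof in this paper runs through the density Hales--Jewett theorem [\ref{dhjpaper}], which itself sits downstream of the Geelen--Kabell machinery. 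Your argument survives because you also cite Geelen--Kabell directly for that step; just drop the appeal to Theorem~\ref{main} there, and spell out the small pigeonhole argument (for each target rank some $q'\in(q,\ell]$ works, so one fixed $q'$ works for infinitely many, hence all, ranks) that puts $q'$ into $Q$.
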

		
	Our main result thus applies to the densest matroids in every class of type (\ref{exp}) for which the lower bound $h_{\cM}(n) \ge \frac{q^n-1}{q-1}$ does not eventually hold with equality.  
	
	\subsection*{Minor-closed classes}
	
	We now give a version of our main theorem in terms of minor-closed classes, and state several corollaries. For each prime power $q$, let $\cL(q)$ denote the class of $\GF(q)$-representable matroids. Let $\cL^{\circ}(q)$ denote the closure under minors and isomorphism of $\{\princext{n-1}{q}{n} : n \ge 2\}$. Let $\cL^{\lambda}(q)$ denote the closure under minors and isomorphism of $\{\princext{n-1}{q}{2}: n \ge 2\}$. Our main theorem thus implies the following:
	
\begin{theorem}\label{main2}
	Let $q$ be a prime power. If $\cM$ is a minor-closed class of matroids such that $\frac{q^n-1}{q-1} < h_{\cM}(n) < \infty$ for infinitely many $n$, then $\cM$ contains $\cL^{\circ}(q)$, $\cL^{\lambda}(q)$ or $\cL(q')$ for some $q' > q$. 
\end{theorem}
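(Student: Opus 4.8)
The plan is to derive Theorem~\ref{main2} from Theorem~\ref{main} by a short compactness-style argument, after first reducing to a uniform finite setting.

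Since $\cM$ is minor-closed, $h_{\cM}$ is non-decreasing, and it equals $\infty$ on a final segment of $\bZ_0^+$ if it is ever infinite; as $h_{\cM}(n)<\infty$ for infinitely many $n$, it follows that $h_{\cM}(n)<\infty$ for \emph{every} $n$. In particular $h_{\cM}(2)<\infty$, so putting $\ell:=\max\{2,\,h_{\cM}(2)-1\}$ gives $\ell\ge 2$ and $U_{2,\ell+2}\notin\cM$, since this rank-$2$ matroid has more than $h_{\cM}(2)$ elements. Now suppose for a contradiction that $\cM$ contains none of $\cL^{\circ}(q)$, $\cL^{\lambda}(q)$, or $\cL(q')$ for any $q'>q$; I will contradict the assumed lower bound on $h_{\cM}$.

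The next step is to produce a single integer $n\ge 2$ for which $\princext{n-1}{q}{2}$, $\princext{n-1}{q}{n}$, and $\PG(n-1,q')$ (for every $q'>q$) all fail to lie in $\cM$. Because $\cL^{\circ}(q)$ and $\cL^{\lambda}(q)$ are by definition the minor-and-isomorphism closures of $\{\princext{m-1}{q}{m}:m\ge 2\}$ and $\{\princext{m-1}{q}{2}:m\ge 2\}$, the assumptions $\cM\not\supseteq\cL^{\circ}(q)$ and $\cM\not\supseteq\cL^{\lambda}(q)$ give integers $n_1,n_2\ge 2$ with $\princext{n_1-1}{q}{n_1}\notin\cM$ and $\princext{n_2-1}{q}{2}\notin\cM$. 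Here the one genuinely matroidal point is a monotonicity fact: for $2\le m\le n$ the matroid $\princext{m-1}{q}{m}$ is isomorphic to a minor of $\princext{n-1}{q}{n}$ --- contract a rank-$(n-m)$ flat of the underlying $\PG(n-1,q)$ and simplify, using that free extension commutes with contraction by subsets of the original ground set, so that the added point survives as a free extension of the contracted geometry --- and likewise $\princext{m-1}{q}{2}$ is isomorphic to a minor of $\princext{n-1}{q}{2}$, now contracting a flat skew to the distinguished line. Hence $\princext{n-1}{q}{n}\notin\cM$ for all $n\ge n_1$ and $\princext{n-1}{q}{2}\notin\cM$ for all $n\ge n_2$. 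For the projective geometries, for each prime power $q'>q$ the hypothesis $\cM\not\supseteq\cL(q')$ gives an integer $n(q')$ with $\PG(n(q')-1,q')\notin\cM$; for all but the finitely many prime powers $q'$ with $q<q'\le h_{\cM}(2)$ we in fact have $\PG(1,q')=U_{2,q'+1}\notin\cM$ (a line longer than $h_{\cM}(2)$), so $n(q')$ may be taken to be $2$, whence $n_3:=\sup_{q'>q}n(q')$ is finite. Since $\PG(m-1,q')$ is the restriction of $\PG(n-1,q')$ to a rank-$m$ flat, $\PG(n-1,q')\notin\cM$ for all $q'>q$ whenever $n\ge n_3$. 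I then set $n:=\max\{n_1,n_2,n_3\}$.

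Finally I would apply Theorem~\ref{main} with these $q$, $\ell$, and $n$: there is $r_0$ such that every simple matroid $M$ with $r(M)\ge r_0$ and $|M|>|\PG(r(M)-1,q)|$ has a minor isomorphic to one of $U_{2,\ell+2}$, $\princext{n-1}{q}{2}$, $\princext{n-1}{q}{n}$, or $\PG(n-1,q')$ with $q'>q$. As $\cM$ is minor-closed and, by the previous step, contains none of these matroids, every simple $M\in\cM$ with $r(M)\ge r_0$ must satisfy $|M|\le|\PG(r(M)-1,q)|=\frac{q^{r(M)}-1}{q-1}$; that is, $h_{\cM}(n)\le\frac{q^n-1}{q-1}$ for all $n\ge r_0$. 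This contradicts the assumption that $\frac{q^n-1}{q-1}<h_{\cM}(n)$ for infinitely many $n$, so one of the three containments must hold after all. I expect the only step requiring real (if routine) work to be the monotonicity of the principal extensions in their parameters; handling the infinitely many fields $q'$ for the projective-geometry minor is a minor variant of the same bookkeeping, and everything else follows formally from the definitions of the three classes together with the finiteness and monotonicity of $h_{\cM}$.
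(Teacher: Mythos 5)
Your proposal is correct and follows the route the paper intends: Theorem~\ref{main2} is stated as a direct consequence of Theorem~\ref{main}, and your argument supplies exactly the uniformization the paper leaves implicit --- choosing $\ell$ from $h_{\cM}(2)$, using the minor-monotonicity of $\princext{n-1}{q}{2}$, $\princext{n-1}{q}{n}$ and $\PG(n-1,q')$ in $n$ to pin down a single sufficient $n$, and then feeding the conclusion back into $h_{\cM}$. The only point worth tightening is the very last inequality: the maximiser realising $h_{\cM}(n)$ might a priori have rank below the threshold $r_0$, but since those contribute at most the constant $h_{\cM}(r_0-1)$, which is eventually dominated by $\tfrac{q^n-1}{q-1}$, the contradiction still goes through for all sufficiently large $n$.
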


One can easily determine the growth rate functions of $\cL^{\circ}(q)$ and $\cL^{\lambda}(q)$; we have $h_{\cL^{\circ}(q)}(n) = \frac{q^{n+1}-1}{q-1}$ and $h_{\cL^{\lambda}(q)}(n) = \frac{q^{n+1}-1}{q-1}-q$ for all $n \ge 2$. For any $q' > q$, the growth rate function of $\cL(q')$ dominates both these functions for large $n$, so the following is immediate:

\begin{theorem}
	Let $q$ be a prime power. If $\cM$ is a minor-closed class of matroids so that $h_{\cM}(n) > \frac{q^n-1}{q-1}$ for infinitely many $n$, then $h_{\cM}(n) \ge \frac{q^{n+1}-1}{q-1}-q$ for all sufficiently large $n$. 
\end{theorem}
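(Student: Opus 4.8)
The plan is to read the statement off from Theorem~\ref{main2} together with the growth-rate values for $\cL^\circ(q)$, $\cL^\lambda(q)$ and $\cL(q')$ recorded above: all that is needed is a short case analysis and one asymptotic comparison, with no further matroid theory.

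First I would dispose of the case $h_\cM(n) = \infty$ for some $n$. In that case $\cM$ is not one of the classes described in the Growth Rate Theorem (each of which has everywhere-finite growth-rate function), so $\cM$ contains every simple rank-$2$ matroid; hence $h_\cM(n) = \infty$ for all $n \ge 2$ and the claimed inequality is trivial. So I may assume $h_\cM(n) < \infty$ for every $n$, whereupon the hypothesis that $h_\cM(n) > \frac{q^n-1}{q-1}$ for infinitely many $n$ becomes exactly the hypothesis of Theorem~\ref{main2} that $\frac{q^n-1}{q-1} < h_\cM(n) < \infty$ for infinitely many $n$. That theorem then provides a prime power $q' > q$ such that $\cM$ contains one of $\cL^\circ(q)$, $\cL^\lambda(q)$, or $\cL(q')$.

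Next I would treat the three cases, using throughout that $h_\cM(n) \ge h_\cC(n)$ for every $n$ whenever $\cC \subseteq \cM$, since any simple matroid of rank at most $n$ witnessing $h_\cC(n)$ also belongs to $\cM$. If $\cL^\circ(q) \subseteq \cM$, then $h_\cM(n) \ge h_{\cL^\circ(q)}(n) = \frac{q^{n+1}-1}{q-1} \ge \frac{q^{n+1}-1}{q-1} - q$ for all $n \ge 2$ --- the lower bound on $h_{\cL^\circ(q)}$ being realised by contracting the freely placed point of $\princext{n}{q}{n+1}$, which lies on no line of $\PG(n,q)$ and so leaves a simple rank-$n$ matroid with $\frac{q^{n+1}-1}{q-1}$ elements. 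If $\cL^\lambda(q) \subseteq \cM$, then $h_\cM(n) \ge h_{\cL^\lambda(q)}(n) = \frac{q^{n+1}-1}{q-1} - q$ for all $n \ge 2$, which is the required bound on the nose; here contracting the freely placed point of $\princext{n}{q}{2}$ merges exactly the $q+1$ points of one line of $\PG(n,q)$ to a single point. Finally, if $\cL(q') \subseteq \cM$, then $h_\cM(n) \ge h_{\cL(q')}(n) = \frac{(q')^n-1}{q'-1}$, and it remains only to check that $\frac{(q')^n-1}{q'-1} \ge \frac{q^{n+1}-1}{q-1} - q$ once $n$ is large enough; this is clear because the ratio of the leading terms $\frac{(q')^n}{q'-1}$ and $\frac{q^{n+1}}{q-1}$ is a positive constant times $(q'/q)^n$, which tends to infinity since $q' > q$. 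Combining the three cases yields $h_\cM(n) \ge \frac{q^{n+1}-1}{q-1} - q$ for all sufficiently large $n$.

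I do not anticipate a genuine obstacle: the real content sits in Theorem~\ref{main2} and in the evaluation of $h_{\cL^\circ(q)}$ and $h_{\cL^\lambda(q)}$ recorded before the statement. The only step requiring any computation is the asymptotic domination in the $\cL(q')$ case, which is immediate once the two growth-rate functions are placed side by side; and the only point needing care is the bookkeeping in the degenerate case, namely that an infinite value of $h_\cM$ at even one rank already forces $h_\cM(n) = \infty$ for all $n \ge 2$ via the Growth Rate Theorem.
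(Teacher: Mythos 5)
Your proof is correct and follows the paper's own route exactly: the paper presents this theorem as an immediate consequence of Theorem~\ref{main2} together with the stated values $h_{\cL^{\circ}(q)}(n) = \frac{q^{n+1}-1}{q-1}$, $h_{\cL^{\lambda}(q)}(n) = \frac{q^{n+1}-1}{q-1}-q$, and the eventual domination by $h_{\cL(q')}$. You have simply filled in the routine details (the infinite-case bookkeeping, the contraction arguments behind the growth-rate values, and the asymptotic comparison) that the paper leaves to the reader.
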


For each integer $\ell \ge 2$, let $\cU(\ell)$ denote the class of matroids with no $U_{2,\ell+2}$-minor. Our next corollary is the main theorem of [\ref{line}].

\begin{theorem}\label{linecor}
	If $\ell \ge 2$ is an integer, then $h_{\cU(\ell)}(n) = \frac{q^n-1}{q-1}$ for all sufficiently large $n$, where $q$ is the largest prime power not exceeding $\ell$. 
\end{theorem}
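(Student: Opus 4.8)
The plan is to deduce Theorem~\ref{linecor} from Theorem~\ref{main2}. The lower bound is routine: since $q\le\ell$, every simple rank-$2$ minor of $\PG(n-1,q)$ is $\GF(q)$-representable, hence has at most $q+1<\ell+2$ points, so $\PG(n-1,q)\in\cU(\ell)$ and $h_{\cU(\ell)}(n)\ge\frac{q^n-1}{q-1}$ for every $n$. For the upper bound I would argue by contradiction: suppose $h_{\cU(\ell)}(n)>\frac{q^n-1}{q-1}$ for infinitely many $n$. By the Growth Rate Theorem $h_{\cU(\ell)}(n)<\infty$ for all $n$ (the class omits $U_{2,\ell+2}$, so does not contain every simple rank-$2$ matroid), so Theorem~\ref{main2} applies with this $q$ and forces $\cU(\ell)$ to contain $\cL^{\circ}(q)$, $\cL^{\lambda}(q)$, or $\cL(q')$ for some prime power $q'>q$. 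The proof then reduces to refuting each of these three containments, in every case by exhibiting a matroid in the class in question that has a $U_{2,\ell+2}$-minor.

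The case $\cL(q')$ is immediate: since there is no prime power in $(q,\ell]$, a prime power $q'>q$ satisfies $q'\ge\ell+1$, so $\PG(1,q')\cong U_{2,q'+1}$ already has a $U_{2,\ell+2}$-restriction, contradicting $U_{2,\ell+2}\notin\cL(q')\subseteq\cU(\ell)$. For the other two classes the generators have no such restriction when $\ell$ is not a prime power---the longest line of $\princext{n-1}{q}{n}$ or of $\princext{n-1}{q}{2}$ has at most $q+2$ points---so I would instead pass to a contraction minor. For $\cL^{\circ}(q)$, take $M:=\princext{2}{q}{3}$, which adjoins to $\PG(2,q)$ an element $e$ lying in the closure of no proper flat; then $M/e$ is simple of rank $2$ on its $q^2+q+1$ remaining elements, so $M/e\cong U_{2,q^2+q+1}$. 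For $\cL^{\lambda}(q)$, take $M:=\princext{2}{q}{2}$, a principal extension on a line $L$ of $\PG(2,q)$; in $M/e$ the $q+1$ points of $L$ form one parallel class while the other $q^2$ points are pairwise non-parallel, so the simplification of $M/e$ is isomorphic to $U_{2,q^2+1}$. In both cases it then remains only to verify $q^2+1\ge\ell+2$, for which Bertrand's postulate supplies a prime strictly between $q$ and $2q$, whence $\ell<2q$ and so $\ell+2\le 2q+1\le q^2+1$ (using $q\ge 2$). Each of the three containments thus contradicts $U_{2,\ell+2}\notin\cU(\ell)$, so $h_{\cU(\ell)}(n)\le\frac{q^n-1}{q-1}$ for all sufficiently large $n$; with the lower bound, the theorem follows.

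The one step needing an idea rather than routine bookkeeping is the construction just sketched: one must notice that a single restriction of a generator of $\cL^{\circ}(q)$ or $\cL^{\lambda}(q)$ need not display $U_{2,\ell+2}$ when $\ell$ is not a prime power (their lines are too short), and that contracting the adjoined element---in effect projecting from a point placed generically in a plane, or on a line of that plane---is exactly what spreads the points of the plane out along one long line. Everything else is bookkeeping, together with the elementary fact (Bertrand's postulate) that $\ell$ cannot be much larger than $q$.
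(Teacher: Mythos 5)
Your proof is correct and follows the same route as the paper: deduce the result from Theorem~\ref{main2} by showing that each of $\cL^{\circ}(q)$, $\cL^{\lambda}(q)$, and $\cL(q')$ contains a long enough line (the paper reads these lines off the rank-$2$ values of the growth rate functions it stated for $\cL^{\circ}(q)$ and $\cL^{\lambda}(q)$, whereas you reconstruct them explicitly via the contraction $\princext{2}{q}{k}/e$, which amounts to the same thing). One small simplification: you do not need Bertrand's postulate to get $q^2\ge\ell+1$, since $q^2$ is itself a prime power strictly greater than $q$, so maximality of $q$ among prime powers $\le\ell$ immediately gives $q^2>\ell$.
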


Let $\Lambda_k$ denote the rank-$k$ free spike (see [\ref{govw02}] for a definition); the next corollary determines the eventual growth rate function for any class defined by excluding a free spike and a line:
\begin{theorem}\label{spikethm}
	Let $\ell \ge 2$ and $k \ge 3$ be integers. If $\cM$ is the class of matroids with no $U_{2,\ell+2}$- or $\Lambda_k$-minor, then $h_{\cM}(n) = \frac{p^n-1}{p-1}$ for all sufficiently large $n$, where $p$ is the largest prime satisfying $p \le \min(\ell,k+1)$. 
\end{theorem}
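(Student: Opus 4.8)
The plan is to deduce this from Theorem~\ref{main2} together with some easy direct estimates on $h_{\cM}(n)$, where $\cM$ denotes the class of matroids with no $U_{2,\ell+2}$- or $\Lambda_k$-minor. Set $p$ to be the largest prime with $p \le \min(\ell, k+1)$; since $p \le \ell$, the projective geometries $\PG(n-1,p)$ have no $U_{2,\ell+2}$-minor, and I would first check that $\PG(n-1,p)$ also has no $\Lambda_k$-minor. This is where the hypothesis $p \le k+1$ enters: a rank-$k$ free spike contains $2k$ elements in $k$ lines through a common point (the ``tip''), and one expects that realising such a configuration inside a projective geometry over $\GF(p)$ forces $p$ to be large relative to $k$ — concretely, that $\PG(k-1,p)$ contains a $\Lambda_k$-minor only if $p \ge k$ or so. I would pin down the exact threshold (the statement's use of $k+1$ suggests the free spike $\Lambda_k$ is $\GF(p)$-representable precisely when $p \ge k-1$, or a nearby bound) and confirm it rules out $\Lambda_k$ from $\PG(n-1,p)$. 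Granting this, $\PG(n-1,p) \in \cM$ for all $n$, so $h_{\cM}(n) \ge \frac{p^n-1}{p-1}$.

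For the reverse inequality I would argue by contradiction: suppose $h_{\cM}(n) > \frac{p^n-1}{p-1}$ for infinitely many $n$. Since $\cM$ omits $U_{2,\ell+2}$, the Growth Rate Theorem puts $\cM$ in case (3) with some prime power $q$; the hypothesis forces $h_{\cM}(n) < \infty$, and I claim $q \ge p$ (as $\PG(n-1,p) \in \cM$ gives $h_{\cM}(n) \ge \frac{p^n-1}{p-1}$, and a class in case (3) with base $q$ satisfies $h_{\cM}(n) \le c q^n$, so $q \ge p$). If $q = p$ then $h_{\cM}(n) > \frac{q^n-1}{q-1}$ for infinitely many $n$ and Theorem~\ref{main2} yields that $\cM$ contains $\cL^{\circ}(q)$, $\cL^{\lambda}(q)$, or $\cL(q')$ for some $q' > q = p$. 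Each of these three possibilities must be excluded: the class $\cL(q')$ with $q' > p$ contains $U_{2,q'+1}$; since $q' > p$ and $p$ is the largest prime $\le \min(\ell,k+1)$, I would verify $q' + 1 \ge \ell + 2$ is \emph{not} guaranteed in general (a prime power strictly above $p$ could still be $\le \ell$), so the line exclusion alone may not kill $\cL(q')$, and one must also invoke the $\Lambda_k$ exclusion — i.e.\ show every $\GF(q')$-representable class with $q' > p$ contains a $\Lambda_k$-minor, using $q' \ge k$ or the relevant representability threshold for free spikes. Similarly I would show $\cL^{\circ}(p)$ and $\cL^{\lambda}(p)$ each contain either a $U_{2,\ell+2}$- or a $\Lambda_k$-minor: the principal extensions $\princext{n-1}{p}{n}$ and $\princext{n-1}{p}{2}$ should, for large $n$, contain long lines or free-spike minors because adding a point freely on a flat of a projective geometry over $\GF(p)$ creates lines/spikes not available in $\GF(p)$ itself.

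The one remaining case is $q > p$: then $\cM$ is in case (3) with base $q > p$, so $h_{\cM}(n) \ge \frac{q^n-1}{q-1}$ and $\cM \supseteq \cL(q)$. I would rule this out exactly as above — $\cL(q)$ with $q > p \ge $ (a prime realising the spike threshold) contains a $\Lambda_k$-minor (or $U_{2,\ell+2}$ if $q \ge \ell+1$) — contradicting $\cM \in \cM$. Combining, $h_{\cM}(n) \le \frac{q^n-1}{q-1}$ eventually with $q = p$, and together with the lower bound we get equality for all large $n$.

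The main obstacle, and the step deserving the most care, is the arithmetic/geometry of free spikes inside projective and $\GF(q')$-representable classes: precisely, determining the least prime (power) $q$ for which $\PG(n-1,q)$ — or some $\GF(q)$-representable matroid — contains a $\Lambda_k$-minor, and checking this threshold is exactly $k+1$ as the statement of Theorem~\ref{spikethm} implies. Everything else is a bookkeeping exercise in applying Theorem~\ref{main2} and the Growth Rate Theorem. I would likely isolate the spike fact as a separate lemma: \emph{the free spike $\Lambda_k$ is $\GF(q)$-representable, hence a minor of $\PG(k-1,q)$, if and only if $q \ge k-1$ (equivalently $\Lambda_k$ has no minor in $\cL(q)$ exactly when $q \le k-2$, i.e.\ $q+1 \le k-1 < k+1$)}, and then the two bounds ``$p \le \ell$'' and ``$p \le k+1$'' are precisely what make $\PG(n-1,p)$ avoid both excluded minors while any larger base fails.
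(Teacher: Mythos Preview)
Your overall architecture matches the paper's: show $\cL(p) \subseteq \cM$ to get the lower bound, then apply Theorem~\ref{main2} and rule out each of $\cL^{\circ}(p)$, $\cL^{\lambda}(p)$, $\cL(q')$. The detour through the Growth Rate Theorem to locate a base $q$ is unnecessary (one can apply Theorem~\ref{main2} directly with $q=p$), but it is not wrong.

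The genuine gaps are the two ``spike lemmas'' you flag as the main obstacle, and your guesses at both are off. First, the representability criterion you propose, ``$\Lambda_k \in \cL(q)$ iff $q \ge k-1$'', is incorrect and, more importantly, misses the prime/composite dichotomy. The correct statement (the paper's Lemma~\ref{spikerep}) is that for a prime power $q \ge 3$, one has $\Lambda_k \in \cL(q)$ if and only if $q$ is \emph{not} prime or $k \le q-2$. Thus for prime $p$ the condition $\Lambda_k \notin \cL(p)$ is exactly $p \le k+1$, which is why that bound appears in the theorem; and for any proper prime power $q'$ one has $\Lambda_k \in \cL(q')$ automatically. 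Without the composite clause you cannot eliminate $\cL(q')$ when $q'$ is, say, $4$ or $9$: such a $q'$ can satisfy $p < q' \le \ell$ and $q' \le k+1$ simultaneously, so neither excluded minor is forced by a pure size threshold. The paper's clean way to finish this case is: since $\cL(q') \subseteq \cM$ we have $U_{2,\ell+2}\notin\cL(q')$ and $\Lambda_k\notin\cL(q')$, whence $q'$ is prime and $q' \le \min(\ell,k+1)$, contradicting the maximality of $p$.

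Second, to dispose of $\cL^{\circ}(p)$ and $\cL^{\lambda}(p)$ you need that $\Lambda_k$ lies in both classes for \emph{every} prime power $p$ and every $k \ge 3$; your sketch (``long lines or free-spike minors because adding a point freely \dots'') does not supply this. The paper's Lemma~\ref{spikeext} gives a short explicit construction: $\Lambda_k$ is the truncation of the regular matroid $M(K_{2,k})$, hence lies in $\cL^{\circ}(p)$, and a similar principal-truncation-of-a-line construction on an extension of $M(K_{2,k})$ places $\Lambda_k$ in $\cL^{\lambda}(p)$. Once these two lemmas are stated correctly, the rest of your plan goes through verbatim.
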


Let $\Delta_k$ denote the rank-$k$ free swirl (defined in [\ref{govw02}] as just a \emph{swirl}). We do not obtain a complete version of Theorem~\ref{spikethm} for swirls, but still obtain a result in a large range of cases. A \emph{Mersenne prime} is a prime number of the form $2^p-1$ where $p$ is also prime. 

\begin{theorem}\label{swirlthm}
		Let $2^{p}-1$ and $2^{p'}-1$ be consecutive Mersenne primes, and let $k$ and $\ell$ be integers for which  $2^p \le \ell < \min(2^{2p}+2^p,2^{p'})$ and $k \ge \max(4,2^p-2)$. If $\cM$ is the class of matroids with no $U_{2,\ell+2}$- or $\Delta_k$-minor, then $h_{\cM}(n) = \frac{2^{pn}-1}{2^p-1}$ for all sufficiently large $n$.
	\end{theorem}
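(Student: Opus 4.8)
The plan is to deduce Theorem~\ref{swirlthm} from Theorem~\ref{main2} together with two facts about free swirls available from [\ref{govw02}]: for $k \ge 4$, the free swirl $\Delta_k$ is \emph{not} $\GF(q)$-representable when $q-1$ is prime and $q \le k+2$, while $\Delta_k$ \emph{is} $\GF(q)$-representable whenever $q-1$ is not prime. Write $q = 2^p$; by hypothesis $q \le \ell < q^2+q = 2^{2p}+2^p$, the number $q-1 = 2^p-1$ is prime, and $k \ge \max(4,q-2)$, so in particular $q \le k+2$. Note that $\cM$ is minor-closed.

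\emph{Lower bound.} I claim that $\PG(n-1,q) \in \cM$ for every $n$. Its lines have $q+1 \le \ell+1$ points, so it has no $U_{2,\ell+2}$-minor; and since the class of $\GF(q)$-representable matroids is minor-closed, for the absence of a $\Delta_k$-minor it suffices that $\Delta_k$ is not $\GF(q)$-representable. This is exactly where the Mersenne hypothesis and the bound $k \ge 2^p-2$ enter: because $q-1$ is prime, $q \le k+2$, and $k \ge 4$, the first cited fact gives that $\Delta_k$ is not $\GF(q)$-representable (the point of $k \ge 4$ being to exclude the exceptional rank-$3$ swirl $\Delta_3 \cong U_{3,6}$, which is $\GF(4)$-representable). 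Hence $h_{\cM}(n) \ge |\PG(n-1,q)| = \frac{q^n-1}{q-1}$ for all $n$.

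\emph{Upper bound.} Since $U_{2,\ell+2} \notin \cM$, the Growth Rate Theorem gives $h_{\cM}(n) < \infty$ for all $n$. Suppose for a contradiction that $h_{\cM}(n) > \frac{q^n-1}{q-1}$ for infinitely many $n$. Then Theorem~\ref{main2}, applied with our prime power $q$, shows that $\cM$ contains one of $\cL^{\circ}(q)$, $\cL^{\lambda}(q)$, or $\cL(q')$ for some prime power $q' > q$, and I would eliminate each possibility. If $\cL^{\circ}(q) \subseteq \cM$: the matroid $\princext{2}{q}{3} \in \cL^{\circ}(q)$ is $\PG(2,q)$ with a point $e$ freely added to its rank-$3$ (hence whole) flat, so $e$ lies in the closure of no line of $\PG(2,q)$ and the contraction $\princext{2}{q}{3}/e$ is a simple rank-$2$ matroid on all $q^2+q+1$ points, i.e.\ $U_{2,\,q^2+q+1}$; as $\ell < q^2+q$ we get $\ell+2 \le q^2+q+1$, so $U_{2,\ell+2}$ is a restriction of this minor and lies in $\cM$ --- a contradiction (this is the only place $\ell < 2^{2p}+2^p$ is used). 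If $\cL^{\lambda}(q) \subseteq \cM$: when $\ell = q$, the matroid $\princext{2}{q}{2} \in \cL^{\lambda}(q)$ has a line with $q+2 = \ell+2$ points and hence a $U_{2,\ell+2}$-minor, a contradiction; when $\ell > q$ no line-counting argument can work, since every line of every member of $\cL^{\lambda}(q)$ has at most $q+2 < \ell+2$ points, so one instead uses that $\princext{m-1}{q}{2}$ has a $\Delta_k$-minor once $m$ is large (a fact about the minor-structure of $\cL^{\lambda}(q)$ from [\ref{line}]/[\ref{nthesis}]), whence $\Delta_k \in \cL^{\lambda}(q) \subseteq \cM$ --- again a contradiction. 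Finally, if $\cL(q') \subseteq \cM$ for a prime power $q' > q$: then $\PG(2,q') \in \cM$, so $q'+1 \le \ell+1$, giving $q < q' \le \ell < 2^{p'}$; a prime power $q'$ with $q'-1$ prime is either $3$ or equals $2^m$ for a prime $m$ with $2^m-1$ a Mersenne prime, and since $q' > 2^p \ge 4$ while $2^p-1$ and $2^{p'}-1$ are \emph{consecutive} Mersenne primes, neither case can occur, so $q'-1$ is composite; by the second cited fact $\Delta_k$ is then $\GF(q')$-representable, so $\Delta_k \in \cL(q') \subseteq \cM$ --- a contradiction (this is where ``consecutive Mersenne primes'' is used). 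All three cases being impossible, $h_{\cM}(n) \le \frac{q^n-1}{q-1}$ for all sufficiently large $n$, which with the lower bound completes the proof.

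\emph{Main obstacle.} Theorem~\ref{main2} does the structural heavy lifting, so the substance lies in the two inputs on free swirls: the exact $\GF(q)$-representability criterion for $\Delta_k$, and the fact that every sufficiently large principal extension $\princext{m-1}{q}{2}$ has a $\Delta_k$-minor. I expect the case $\cL^{\lambda}(q) \subseteq \cM$ to be the most delicate: unlike the other two, it cannot be settled by bounding line lengths and genuinely requires exhibiting the free swirl as a minor of a principal extension of a projective geometry.
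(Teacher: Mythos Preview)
Your proposal is correct and follows essentially the same route as the paper. The only difference worth noting is in the $\cL^{\lambda}(q)$ case: the paper handles it uniformly for all $\ell$ (no case split) via Lemma~\ref{swirlext}, which gives a direct $2$-sum construction showing $\Delta_k \in \cL^{\lambda}(q)$ for every $k \ge 4$ and every prime power $q \ge 3$; this is exactly the ``main obstacle'' you anticipated, and it is proved in the paper itself rather than imported from [\ref{line}] or [\ref{nthesis}].
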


If $p' > 2p$, there is a range of values of $\ell$ to which the above theorem does not apply. This does occur (for example, when $(p,p') = (127,521)$) and in fact, the growth rate function for $\cM$ can take a different eventual form for such an $\ell$; we discuss this in Section~\ref{corsection}. 

For excluding both a free spike and a free swirl, we get a nice result:

\begin{theorem}\label{spikeswirl}
	Let $\ell \ge 3$ and $k \ge 3$ be integers. If $\cM$ is the class of matroids with no $U_{2,\ell+2}$-, $\Lambda_k$- or $\Delta_k$-minor, then $h_{\cM}(n) = \tfrac{1}{2}(3^n-1)$ for all sufficiently large $n$. 
\end{theorem}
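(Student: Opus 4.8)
The plan is to show that $h_{\cM}(n)$ is squeezed between $|\PG(n-1,3)| = \tfrac{1}{2}(3^n-1)$ from below for all $n$, and the same quantity from above for all sufficiently large $n$. First I would prove the lower bound by checking that $\cL(3) \subseteq \cM$: every line of a $\GF(3)$-representable matroid has at most four points, so no member of $\cL(3)$ has a $U_{2,\ell+2}$-minor since $\ell \ge 3$; and no member of $\cL(3)$ has a $\Lambda_k$- or $\Delta_k$-minor, because for each $k \ge 3$ neither the rank-$k$ free spike nor the rank-$k$ free swirl is $\GF(3)$-representable (equivalently, $\PG(m-1,3)$ has no such minor) --- a statement about small configurations that I would verify directly. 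It then follows that $h_{\cM}(n) \ge h_{\cL(3)}(n) = \tfrac{1}{2}(3^n-1)$ for all $n \ge 2$.

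For the upper bound I would argue by contradiction. If the claimed equality fails for infinitely many $n$, then by the lower bound $h_{\cM}(n) > \tfrac{1}{2}(3^n-1) = \frac{3^n-1}{3-1}$ for infinitely many $n$; and $h_{\cM}(n) < \infty$ for every $n$ because $\cM$ excludes $U_{2,\ell+2}$ (Growth Rate Theorem). Applying Theorem~\ref{main2} with $q = 3$ then forces $\cM$ to contain $\cL^{\circ}(3)$, $\cL^{\lambda}(3)$, or $\cL(q')$ for some prime power $q' > 3$, and it remains to rule out each possibility. If $\cL(q') \subseteq \cM$ with $q' > \ell$, then $U_{2,q'+1} \in \cL(q')$ and $q'+1 \ge \ell+2$, contradicting $\cM \subseteq \cU(\ell)$. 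If $\cL(q') \subseteq \cM$ with $4 \le q' \le \ell$, then at least one of $\Lambda_k$, $\Delta_k$ is $\GF(q')$-representable, so $\cM$ has a $\Lambda_k$- or $\Delta_k$-minor --- again impossible. Finally, if $\cL^{\circ}(3) \subseteq \cM$ or $\cL^{\lambda}(3) \subseteq \cM$, I would use that each of these classes contains, for every $k \ge 3$, a copy of $\Lambda_k$ or of $\Delta_k$ as a minor --- extracted as minors of the principal extensions $\princext{m-1}{3}{m}$ (resp. $\princext{m-1}{3}{2}$) that define $\cL^{\circ}(3)$ (resp. $\cL^{\lambda}(3)$) --- which once more puts a forbidden minor into $\cM$. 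Having excluded all three cases, $h_{\cM}(n) \le \tfrac{1}{2}(3^n-1)$ for all large $n$, and with the lower bound this proves the theorem.

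The reductions through Theorem~\ref{main2} and the Growth Rate Theorem are routine; the substance of the argument --- and the step I expect to be the main obstacle --- is the bookkeeping for free spikes and free swirls, namely: (i) $\Lambda_k$ and $\Delta_k$ are never $\GF(3)$-representable for $k \ge 3$; (ii) for every prime power $q' \ge 4$ and every $k \ge 3$ at least one of $\Lambda_k$, $\Delta_k$ is $\GF(q')$-representable, so that no class $\cL(q')$ with $q' > 3$ avoids both a free spike and a free swirl; and (iii) $\cL^{\circ}(3)$ and $\cL^{\lambda}(3)$ each contain $\Lambda_k$ or $\Delta_k$ for every $k \ge 3$. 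Each of these is a concrete statement about explicit matroids and their coordinatisations, provable by direct construction and small-case analysis (the relevant configurations being essentially those of [\ref{govw02}] and [\ref{line}]); getting exactly the right uniform version of (ii) --- which is precisely what kills every base strictly between $3$ and the bases that excluding a free spike or a free swirl alone would allow --- is where the real content lies.
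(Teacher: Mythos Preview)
Your proposal is correct and follows essentially the same route as the paper: the lower bound via $\cL(3)\subseteq\cM$, then Theorem~\ref{main2} to force $\cL^{\circ}(3)$, $\cL^{\lambda}(3)$ or some $\cL(q')$ with $q'>3$ into $\cM$, each eliminated by producing a forbidden $\Lambda_k$ or $\Delta_k$. The paper streamlines two of your steps: it handles all $q'>3$ at once (no split at $\ell$) via the single observation that one of $q',\,q'-1$ is composite, whence Lemma~\ref{spikerep} puts $\Lambda_k$ or $\Delta_k$ in $\cL(q')$; and for your item~(iii) it invokes Lemma~\ref{spikeext}, which shows $\Lambda_k\in\cL^{\lambda}(q)\cap\cL^{\circ}(q)$ for every prime power $q$, so the free spike alone suffices there.
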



\section{The Main Result}

In this section we prove Theorem~\ref{main}. We use the notation of Oxley [\ref{oxley}], writing $\elem(M)$ for the number of points (that is, rank-$1$ flats) in a matroid $M$. The following theorem from [\ref{dhjpaper}] is our main technical tool: 

\begin{theorem}[Matroidal density Hales-Jewett theorem]\label{dhj}
	There is a function $f: \bZ^3 \times \bR \to \bZ$ so that, for every positive real number $\alpha$, every prime power $q$ and for all integers $\ell \ge 2$ and $n \ge 2$, if $M \in \cU(\ell)$ satisfies $\elem(M) \ge \alpha q^{r(M)}$ and $r(M) \ge f(\ell,n,q,\alpha)$, then $M$ has an $\AG(n-1,q)$-restriction or a $\PG(n-1,q')$-minor for some $q' > q$. 
\end{theorem}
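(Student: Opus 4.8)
The plan is to peel off the easy case, reduce to an (essentially) $\GF(q)$-representable matroid, and then invoke the set-theoretic density Hales--Jewett theorem of Furstenberg and Katznelson. First, if $M$ has a $\PG(n-1,q')$-minor for some prime power $q' > q$ there is nothing to prove, so I would assume not. We may freely assume $r(M)$ is as large as needed, and (via the usual connectivity reductions, which do not destroy density or create new long lines) that $M$ is vertically connected enough for the later steps.

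The first substantive step is to locate inside $M$ a \emph{spanning} restriction $N$ — i.e.\ a restriction of some $\PG(r(M)-1,q)$ — with $\elem(N) \ge \beta q^{r(M)}$ for some $\beta = \beta(\ell,q,\alpha) > 0$. Here is the idea. By the Growth Rate Theorem, $\cU(\ell)$ has a prime-power base $q_1$, and comparing $\elem(M) \ge \alpha q^{r(M)}$ with the upper bound $\elem(M) \le cq_1^{r(M)}$ forces $q_1 \ge q$ once $r(M)$ is large; if $q_1 > q$, then $q_1$ is a legitimate value of $q'$ and the known results on projective geometries in exponentially dense matroids (applied within the class $\cU(\ell)$, whose densest members have base $q_1$) produce a $\PG(n-1,q_1)$-minor of $M$, contradicting our assumption. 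So $q_1 = q$: the densest matroids of $\cU(\ell)$ have exactly the growth rate of the $\GF(q)$-representable matroids, and a stability/exact-structure analysis of such extremal configurations shows that all but a bounded-density portion of $E(M)$ lies on a $\GF(q)$-representable flat of full rank; discarding the rest gives $N$.

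The second step is purely combinatorial. Fix a coordinatisation realising $N$ as a restriction of $\PG(m-1,q)$ with $m = r(M)$, and split the projective geometry into affine charts, $\PG(m-1,q) = \AG(m-1,q)\cup\PG(m-2,q)$. Since each hyperplane at infinity carries only a $q^{-1}$-fraction of the points, some affine chart contains at least a $(1-q^{-1})$-fraction of $E(N)$, hence at least $\beta' q^{m-1}$ of the $q^{m-1}$ points of that $\AG(m-1,q)$, where $\beta' = (q-1)\beta$; identifying $\AG(m-1,q)$ with $\GF(q)^{m-1}$ and then with $[q]^{m-1}$, I would apply the multidimensional density Hales--Jewett theorem with alphabet size $q$ and density $\beta'$. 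Provided $m-1$ is large enough — and it is, since $f$ may be defined by composing the Growth-Rate bounds with the (enormous) Hales--Jewett bound, which is where $f$'s dependence on $n,q,\alpha$ and, through $\beta'$, on $\ell$ originates — the dense subset of $[q]^{m-1}$ contains an $(n-1)$-dimensional combinatorial subspace, all of whose points lie in $E(N)\subseteq E(M)$. But a combinatorial subspace with free blocks $B_1,\dots,B_{n-1}$ and fixed part $c$ is exactly the affine subspace $\{\, c + j_1 u_1 + \cdots + j_{n-1} u_{n-1} : j_i \in \GF(q)\,\}$, where $u_i$ is the indicator vector of $B_i$; the $u_i$ are linearly independent because the $B_i$ are disjoint and nonempty, so this point set is a copy of $\AG(n-1,q)$, and it is the required restriction of $M$.

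The hard part is the first step: extracting from a dense matroid in $\cU(\ell)$ with no $\PG(n-1,q')$-minor ($q'>q$) a dense $\GF(q)$-representable restriction \emph{of full rank}. Once that reduction is in place, the affine-chart bookkeeping is routine and the density Hales--Jewett theorem is a black box. The delicate point is that it is not enough to find a dense representable \emph{minor} — contracting first would only produce an affine geometry as a minor of $M$, not as a restriction — so the argument must control, using the full strength of the growth-rate structure theory, exactly which $\Omega(q^{r(M)})$-dense almost-representable configurations can occur and show that each of them already contains a $\GF(q)$-representable restriction of the same rank carrying a positive fraction of the points.
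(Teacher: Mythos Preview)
The paper does not prove this theorem at all; it is quoted verbatim from~[\ref{dhjpaper}] and used as a black box, so there is no proof here to compare your attempt against.

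That said, your sketch has a concrete gap in the reduction step. In the case $q_1>q$ you claim that ``known results on projective geometries in exponentially dense matroids'' give a $\PG(n-1,q_1)$-minor of $M$. But $M$ is only $\alpha q^{r(M)}$-dense, which is $o(q_1^{r(M)})$; results that produce $\PG$-minors over $\GF(q_1)$ need density at base $q_1$, not at the smaller base $q$. For a counterexample, take $\ell=4$ (so $q_1=4$), $q=2$, and $M=\PG(r-1,2)$: then $\elem(M)\ge \tfrac12\cdot 2^{r}$, yet $M$ is binary and has no $\PG(n-1,q')$-minor for any $q'>2$. Your case split would wrongly try to output a $\PG(n-1,4)$-minor here, whereas the theorem is satisfied via the $\AG(n-1,2)$-restriction. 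One can repair the reduction by passing to the subclass of $\cU(\ell)$ obtained by excluding every $\PG(n-1,q')$ with $q'>q$ and reapplying the Growth Rate Theorem to force base exactly $q$, but you have not done this.

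Even granting that repair, the step you yourself identify as ``the hard part'' --- extracting a spanning $\GF(q)$-representable restriction of $M$ carrying a positive fraction of the points --- is not proved; you attribute it to an unspecified ``stability/exact-structure analysis''. That step is essentially the whole theorem: once such a restriction is in hand, your affine-chart averaging and the Furstenberg--Katznelson density Hales--Jewett application are correct and routine. Building that representable frame (and doing so as a \emph{restriction}, not merely a minor, as you rightly stress) is exactly the substantial work carried out in~[\ref{dhjpaper}], and it cannot be absorbed into a citation of general structure theory.
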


For an integer $q \ge 2$, a matroid $M$ is \emph{$q$-dense} if $\elem(M) > \frac{q^{r(M)}-1}{q-1}$. We prove an easy lemma showing when $q$-density is lost by contraction:

\begin{lemma}\label{longline}
	Let $q \ge 2$ be an integer. If $M$ is a $q$-dense matroid and $e \in E(M)$, then either $M \con e$ is $q$-dense, or $M$ has a $U_{2,q+2}$-restriction containing $e$. 
\end{lemma}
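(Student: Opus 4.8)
The plan is to bound $\elem(M)$ in terms of $\elem(M \con e)$ by sorting the points of $M$ according to which line through $e$ contains them. First I would dispose of the case where $e$ is a loop of $M$: then $M \con e = M \del e$, which has the same rank and the same points as $M$, and is therefore immediately $q$-dense. So assume $e$ is a non-loop.

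Next I would use the standard correspondence between the rank-$2$ flats of $M$ containing $e$ and the points of $M \con e$: each line $L$ of $M$ through $e$ determines a point of $M\con e$, distinct lines give distinct points, and every point of $M\con e$ arises this way. Let $L_1,\dots,L_t$ be the lines of $M$ through $e$, so $t = \elem(M\con e)$, and recall $r(M\con e) = r(M)-1$. Every non-loop element of $M$ lies either in the parallel class of $e$ or in exactly one $L_i$ (it is in the closure of $e$ together with itself), and distinct points other than that of $e$ lie on distinct $L_i$. Hence, writing $\elem(L_i)$ for the number of points on the line $L_i$,
$$\elem(M) = 1 + \sum_{i=1}^{t}\bigl(\elem(L_i)-1\bigr),$$
where the leading $1$ counts the point containing $e$ and each $-1$ removes that same point from the count on $L_i$.

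Now the dichotomy. If some $L_i$ satisfies $\elem(L_i)\ge q+2$, then $L_i$ (a line containing $e$) yields the required $U_{2,q+2}$-restriction containing $e$, and we are done. Otherwise $\elem(L_i)-1\le q$ for every $i$, so the displayed identity gives $\elem(M)\le 1 + qt$. Feeding in the hypothesis $\elem(M) > \tfrac{q^{r(M)}-1}{q-1}$ and rearranging gives
$$t \;>\; \frac{1}{q}\left(\frac{q^{r(M)}-1}{q-1} - 1\right) \;=\; \frac{q^{r(M)-1}-1}{q-1} \;=\; \frac{q^{r(M\con e)}-1}{q-1},$$
so $M\con e$ is $q$-dense. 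I do not expect a genuine obstacle here: the argument is a short counting estimate, and the only points needing care are handling the loop case separately and making sure the point of $e$ is counted exactly once overall rather than once per line.
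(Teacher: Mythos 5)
Your proof is correct and follows essentially the same approach as the paper's: group the points of $M$ by the line through $e$ on which they lie, observe that each such line contributes at most $q$ points beyond $e$'s point unless one of them is a $(q+2)$-point line, and then run the arithmetic to conclude $M\con e$ is $q$-dense. The paper simply reduces to the case $M$ simple at the outset (which also disposes of the loop case you handle separately) and phrases the count as each point of $M\con e$ containing at most $q$ elements of $M\del e$, but the counting identity and the inequality are the same.
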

\begin{proof}
	We may assume that $M$ is simple. If $|L| \ge q+2$ for some line $L$ through $e$, then $M$ has a $U_{2,q+2}$-restriction containing $e$. Otherwise, no line through $e$ contains $q+2$ points, so each point of $M \con e$ contains at most $q$ elements of $M$. Therefore $\elem(M \con e) \ge q^{-1} \elem(M \del e) > \frac{q^{r(M)-1}-1}{q-1}$ and $r(M \con e) = r(M) - 1$, so $M \con e$ is $q$-dense. 
\end{proof}

A simple induction now gives a corollary originally due to Kung [\ref{kung91}]:

\begin{corollary}\label{kung}
	If $\ell \ge 2$ and $M \in \cU(\ell)$ then $\elem(M) \le \frac{\ell^{r(M)}-1}{\ell-1}$.
\end{corollary}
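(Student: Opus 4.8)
The plan is the ``simple induction'' the excerpt alludes to: induct on $r = r(M)$. First reduce to the simple case — since $\si(M)$ is a restriction of $M$ (hence a minor, hence again in $\cU(\ell)$) with $\elem(\si(M)) = \elem(M)$ and $r(\si(M)) = r(M)$, it is enough to prove the bound when $M$ is simple. For the base case $r \le 1$ we have $\elem(M) = 0$ when $r = 0$ and $\elem(M) \le 1 = \tfrac{\ell^{1}-1}{\ell-1}$ when $r = 1$, so the inequality holds.

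For the inductive step, fix $r \ge 1$, assume the bound holds for all members of $\cU(\ell)$ of rank at most $r-1$, and argue by contradiction: suppose $\elem(M) > \tfrac{\ell^{r}-1}{\ell-1}$, i.e.\ $M$ is $\ell$-dense (this applies the definition of $q$-density with $q = \ell$, which is legitimate since $\ell \ge 2$ is an integer). Choose any $e \in E(M)$, which is possible as $r \ge 1$. Since $M \in \cU(\ell)$ has no $U_{2,\ell+2}$-minor it has no $U_{2,\ell+2}$-restriction, so Lemma~\ref{longline} (taken with $q = \ell$) forces $M \con e$ to be $\ell$-dense, that is, $\elem(M \con e) > \tfrac{\ell^{r-1}-1}{\ell-1}$. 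But $M \con e \in \cU(\ell)$ and $r(M \con e) = r-1$, so the inductive hypothesis gives $\elem(M \con e) \le \tfrac{\ell^{r-1}-1}{\ell-1}$, a contradiction. Hence $\elem(M) \le \tfrac{\ell^{r}-1}{\ell-1}$, completing the induction.

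I do not anticipate a real obstacle here: all the combinatorial work is already contained in Lemma~\ref{longline}, and what is left is the routine ``peel off a point by contracting and apply induction'' argument. The only minor points to watch are the reduction to simple matroids and making sure the closed form $\tfrac{\ell^{r}-1}{\ell-1}$ is read correctly at $r = 0$ (the empty sum), where the base case anchors the induction.
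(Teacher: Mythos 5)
Your proof is correct and is precisely the ``simple induction'' the paper has in mind: reduce to the simple case, contract an element, and use Lemma~\ref{longline} (with $q=\ell$, noting that $M\in\cU(\ell)$ rules out a $U_{2,\ell+2}$-restriction) to pass $\ell$-density down a rank, contradicting the inductive bound. There is no gap and no divergence from the intended argument.
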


We now reduce Theorem~\ref{main} to a case where all cocircuits are large:

\begin{lemma}\label{reduction}
	Let $t,\ell \ge 2$ be integers and let $q$ be a prime power. If $M \in \cU(\ell)$ is a $q$-dense matroid so that $(\sqrt{5}-1)^{r(M)-1} \ge \ell^{t-1}$, then $M$ has a $q$-dense restriction $M_0$ such that $r(M_0) \ge t$ and every cocircuit of $M_0$ has rank at least $r(M_0) -1$. 
\end{lemma}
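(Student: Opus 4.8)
The plan is to repeatedly delete a small cocircuit as long as one exists, and argue that this process cannot run for too long before either terminating (leaving a matroid all of whose cocircuits are large) or reducing the rank below $t$ — and then show the latter cannot happen given the hypothesis on $r(M)$. Concretely, I would build a sequence $M = M_0' \supseteq M_1' \supseteq \cdots$ of $q$-dense restrictions as follows. Given $M_i'$, if every cocircuit of $M_i'$ has rank at least $r(M_i')-1$ and $r(M_i') \ge t$, stop and output $M_0 := M_i'$. Otherwise, first note that if $r(M_i') < t$ we will have derived a contradiction (see below), so we may assume $r(M_i') \ge t$; then there is a cocircuit $C^*$ of $M_i'$ with $r(M_i') - r_{M_i'}(E(M_i')\setminus C^*) \ge 2$, equivalently the hyperplane-complement structure fails, so deleting $C^*$ drops the rank by at least $2$. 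Set $M_{i+1}' := M_i' \del C^*$, which is a restriction of rank at most $r(M_i')-2$.

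The key point is to control how much density is lost at each step. Since $M_i' \in \cU(\ell)$, Corollary~\ref{kung} gives $|C^*| = \elem(M_i') - \elem(M_{i+1}') $, but more usefully, since $C^*$ spans a flat of corank $\ge 2$ in a sense we can track, we bound $\elem$ of the deleted part. Actually the clean way: the complement $E(M_i')\setminus C^*$ has rank $r_i - s$ with $s \ge 2$, so by Corollary~\ref{kung} applied to $M_{i+1}'$, $\elem(M_{i+1}') \le \frac{\ell^{r_i - 2}-1}{\ell-1}$ is too weak — instead I want $M_{i+1}'$ to stay $q$-dense, which is not automatic. So the better approach is to choose $C^*$ to be a cocircuit of rank \emph{exactly} as small as possible and to delete it in stages, or alternatively to pick among all "bad" cocircuits. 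Let me reformulate: as long as $M_i'$ is $q$-dense of rank $\ge t$ with a cocircuit of rank $\le r_i - 2$, let $F$ be the closure of the complement of such a cocircuit, a flat with $r_{M_i'}(F) \le r_i - 2$, and restrict to $F$. Then $r(M_i'|F) \le r_i - 2$ but we need $M_i'|F$ $q$-dense. This is where the hypothesis $(\sqrt5 - 1)^{r(M)-1} \ge \ell^{t-1}$ must enter: track the quantity $\elem(M_i') \cdot \ell^{-\text{(something)}}$ or rather compare $\elem$ against a geometric bound with ratio related to $\sqrt 5 - 1$.

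The honest reconstruction: since at each deletion step the rank drops by at least $2$ while (by Corollary~\ref{kung}, since the deleted cocircuit sits in a flat of the complement... ) one shows $\elem(M_{i+1}') \ge \elem(M_i') - \elem(M_i'|\cl(\text{complement}))$ where the complement flat has rank $r_i - 1$ at most, hence $\elem$ of it is at most $\frac{\ell^{r_i-1}-1}{\ell-1}$; combined with $q$-density of $M_i'$ this need not keep density. So instead I expect the argument removes a cocircuit and keeps the \emph{larger} side: restrict to the hyperplane $\cl(E\setminus C^*)$ only when that is still dense, which happens because $\elem(M_i') > \frac{q^{r_i}-1}{q-1}$ forces the complement of a corank-$\ge 2$ cocircuit to contain more than $\frac{q^{r_i - 1}-1}{q-1}$ points... and the golden ratio $\sqrt 5 - 1 = 2/\varphi$ appears precisely as the decay rate in a Fibonacci-type recursion bounding the number of steps before the rank would drop below $t$: each step costs $\ge 2$ in rank but we must verify density survives $\lceil (r(M)-t)/2\rceil$-ish steps, and $(\sqrt 5 -1)^{r-1} \ge \ell^{t-1}$ is exactly the inequality that makes the telescoped density bound stay above the $q$-geometric threshold. \textbf{The main obstacle} is getting this density bookkeeping right: I need the precise claim that deleting a rank-$\le(r_i-2)$ cocircuit from a $q$-dense $\cU(\ell)$-matroid leaves a $q$-dense matroid provided the rank is still large relative to $t$, and the $\sqrt 5 - 1$ constant strongly suggests the correct inductive invariant is of the form $\elem(M_i') \ge \frac{q^{r_i}-1}{q-1} + (\text{slack})$ where the slack shrinks by a Fibonacci ratio; pinning down that invariant is the crux, after which termination and the rank-$\ge t$ conclusion follow mechanically.
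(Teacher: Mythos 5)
Your proposal does not reach a proof, and it contains a real error in the setup: you assert that a ``bad'' cocircuit $C^*$ is one for which $r(M_i') - r_{M_i'}(E(M_i')\setminus C^*) \ge 2$, and that deleting it drops the rank by at least $2$. This is impossible. The complement of a cocircuit is always a hyperplane, so $r_{M_i'}(E(M_i')\setminus C^*) = r(M_i')-1$ without exception, and $M_i'\del C^*$ always has rank exactly $r(M_i')-1$. The relevant condition in the lemma is that the cocircuit $C$ \emph{itself} has small rank, $r_{M_0}(C)\le r(M_0)-2$; it is the restriction $M_0|C$, not the deletion $M_0\del C$, whose rank drops by at least $2$. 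This confusion propagates through your attempt: you try to keep only the complement side and hope it stays $q$-dense, which need not happen, and you never resolve the bookkeeping, as you acknowledge at the end.

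The paper avoids the iteration entirely with a single extremal choice. Set $\varphi=\tfrac12(1+\sqrt5)$ and $r=r(M)$, and let $M_0$ be a \emph{minimal} restriction of $M$ satisfying $\elem(M_0)>\varphi^{r(M_0)-r}\tfrac{q^r-1}{q-1}$ (the matroid $M$ itself qualifies, so $M_0$ exists). The threshold is geometric with ratio $1/\varphi$, so $q$-density and $r(M_0)\ge t$ fall out by elementary manipulations using $q\ge 2 > \varphi$, the bound $\tfrac{q^r-1}{q-1}\ge q^{r-1}$, the identity $2/\varphi=\sqrt5-1$, and Corollary~\ref{kung}. The cocircuit condition then comes for free from minimality: if $C$ is a cocircuit with $r_{M_0}(C)\le r(M_0)-2$, then both proper restrictions $M_0|C$ (rank $\le r_0-2$) and $M_0\del C$ (rank $r_0-1$) fail the threshold, so
\[
\elem(M_0)=\elem(M_0|C)+\elem(M_0\del C)\le(\varphi^{-2}+\varphi^{-1})\varphi^{r_0-r}\tfrac{q^r-1}{q-1}=\varphi^{r_0-r}\tfrac{q^r-1}{q-1},
\]
contradicting the choice of $M_0$. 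The golden ratio enters precisely through $\varphi^{-1}+\varphi^{-2}=1$, which is the ``Fibonacci-type'' fact you suspected, but it is applied once via an extremal argument rather than iterated. Your instinct about the role of $\sqrt5-1=2/\varphi$ and the need to partition $\elem$ across a cocircuit and its complement was in the right direction; the missing ideas are (i) that the cocircuit's own rank, not its complement's, is what is small, and (ii) choosing $M_0$ minimal against a $\varphi$-scaled threshold so that both pieces of any bad cocircuit violate it simultaneously.
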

\begin{proof}
	Let $\varphi = \tfrac{1}{2}(1+ \sqrt{5}).$ Let $r = r(M)$, and let $M_0$ be a minimal restriction of $M$ so that $\elem(M_0) > \varphi^{r(M_0)-r}\tfrac{q^r-1}{q-1}$. Let $r_0 = r(M_0)$. Since $(\varphi/q)^{r_0-r} \ge 1$ and $\varphi^{r_0-r} \le 1$, we have 
	\[\elem(M_0) > \tfrac{1}{q-1}\left(\varphi^{r_0-r}q^r - \varphi^{r_0-r}\right) \ge \tfrac{q^{r_0}-1}{q-1},\]
	so $M_0$ is $q$-dense. Moreover, 
	 \[\elem(M_0) > \varphi^{r_0-r}q^{r-1} \ge \varphi^{1-r}2^{r-1} = (\sqrt{5}-1)^{r-1} \ge \ell^{t-1} > \tfrac{\ell^{t-1}-1}{\ell-1},\] so $r(M_0) \ge t$ by Corollary~\ref{kung}. Finally, if $M_0$ had a cocircuit $C$ of rank at most $r(M_0) -2$, minimality would give
	\[\elem(M_0) = \elem(M_0|C) + \elem(M_0 \del C) \le (\varphi^{-2} + \varphi^{-1})\varphi^{r_0-r}\tfrac{q^r-1}{q-1}; \]
	since $\varphi^{-2} + \varphi^{-1} = 1$, this contradicts $\elem(M_0) > \varphi^{r_0-r}\tfrac{q^r-1}{q-1}$. 
\end{proof}

The next lemma finds one of two unavoidable minors in every non-$\GF(q)$-representable extension of a large projective geometry: 

\begin{lemma}\label{unavoidable}
	Let $q$ be a prime power and $m \ge 2$ be an integer. If $M$ is a non-$\GF(q)$-representable extension of $\PG(2m-1,q)$, then $M$ has a minor isomorphic to $\princext{m-1}{q}{2}$ or $\princext{m-1}{q}{m}$. 
\end{lemma}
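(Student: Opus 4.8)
The plan is to take $M$ as a non-$\GF(q)$-representable extension of $\PG(2m-1,q)$, fix a copy of $N := \PG(2m-1,q)$ as a restriction of $M$, and let $e \in E(M) \setminus E(N)$ be an element whose addition already destroys $\GF(q)$-representability; that is, we may assume $M = N + e$ is a single-element extension, since any minor we find in such an $M$ survives in the larger matroid. The key object to study is the flat $F = \cl_N(\cl_M(e) \cap E(N))$, or more precisely the modular cut of $N$ that the extension $e$ determines. The strategy is to split into two cases according to whether $e$ is added "freely enough" to some low-rank flat or whether its behaviour forces interaction with a full-rank structure, aiming to produce $\princext{m-1}{q}{2}$ in the first case and $\princext{m-1}{q}{m}$ in the second.

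First I would recall that single-element extensions of a matroid correspond to modular cuts, and that $\PG(2m-1,q)$ has a very rigid modular cut lattice: a modular cut $\mathcal{K}$ is determined by its minimal members, and because $N$ is modular, the obstruction to $\GF(q)$-representability of $N+e$ is detectable locally. Concretely, $N + e$ fails to be $\GF(q)$-representable precisely when the modular cut $\mathcal{K}$ is not "principal enough" — it is not the set of flats containing a fixed flat $F$, because adding $e$ freely to a flat of $\PG(2m-1,q)$ always yields a $\GF(q)$-representable matroid (it is again coordinatizable over $\GF(q)$ after a field extension is unnecessary — actually a principal extension of a $\GF(q)$-flat need not be $\GF(q)$-representable, so I must be more careful here). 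Let me instead argue: since $M$ is not $\GF(q)$-representable, the modular cut $\mathcal{K}$ defining $e$ contains two flats $F_1, F_2$ whose intersection $F_1 \cap F_2$ is not in $\mathcal{K}$ and is not modular-complemented appropriately; one then uses the submodularity of rank to pick such $F_1, F_2$ with $r(F_1) + r(F_2) - r(F_1 \cap F_2) = r(F_1 \cup F_2)$ (a modular pair) but with the "closure" condition failing, which is exactly what makes the extension non-representable.

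Having isolated this bad pair, I would next do a \emph{rank reduction}: contract and delete inside $N$ to shrink both $F_1$ and $F_2$ down to controlled ranks while keeping $e$ and the bad behaviour, using that minors of $\PG(2m-1,q)$ restrictions are again projective geometry restrictions (of smaller rank $\ge m$, which is where the hypothesis $m \ge 2$ and the doubled rank $2m$ are used — one needs room to perform two independent contractions). In the case where the minimal bad flat can be taken of rank $2$, the resulting minor is the principal extension of a line in $\PG(m-1,q)$, i.e. $\princext{m-1}{q}{2}$; in the complementary case the bad flat is forced to be a hyperplane or the whole ground set after reduction, giving the principal extension of a rank-$m$ flat in $\PG(m-1,q)$, i.e. $\princext{m-1}{q}{m}$. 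The main obstacle will be the second step: precisely characterising which modular cuts of $\PG(2m-1,q)$ yield non-$\GF(q)$-representable extensions, and then showing that \emph{every} such cut, after minor operations, reduces to one of the two extremal principal types rather than some intermediate principal extension $\princext{m-1}{q}{k}$ with $2 < k < m$. I expect this requires showing that an intermediate principal extension of a $\GF(q)$-flat is actually $\GF(q)$-representable (so cannot be the source of non-representability), leaving only $k \in \{2, m\}$ as genuinely non-representable possibilities — this representability dichotomy for principal extensions of projective flats is the technical heart of the argument.
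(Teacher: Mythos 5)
The central idea of the paper's proof is one you nearly hit but then backed away from: because $\PG(2m-1,q)$ is a \emph{modular} matroid (every pair of flats is a modular pair), \emph{every} modular cut of it is closed under intersection, hence principal. So the single-element extension $M$ is automatically a principal extension of some flat $F$ of $\PG(2m-1,q)$; there is no ``bad pair'' $F_1,F_2$ to isolate, and no case analysis on how ``principal enough'' the cut is. Your proposal instead tries to locate the obstruction to representability in a non-principal modular cut, which is looking in exactly the wrong place: non-representability comes from the fact that $\PG(2m-1,q)$ is already \emph{saturated}, so any new element added to a flat of rank $\geq 2$ lies nowhere in $\GF(q)^{2m}$. (Rank-$1$ flats just give a parallel element, hence $r_M(F)\geq 2$.)

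The second and more serious gap is the expected ``representability dichotomy.'' You hope to show that intermediate principal extensions $\princext{m-1}{q}{k}$ with $2<k<m$ are $\GF(q)$-representable, leaving only $k\in\{2,m\}$. This is false — \emph{all} of $\princext{m-1}{q}{k}$ for $2\le k\le m$ are non-$\GF(q)$-representable, by the saturation argument above. The lemma does not assert that $k=2$ and $k=m$ are the only non-representable cases; it asserts that starting from rank $2m$ you can always \emph{contract down} to one of those two. Concretely, take a basis $B$ of $\PG(2m-1,q)$ containing a basis $B_F$ of $F$. If $r_M(F)\geq m$, contract $B\setminus I$ for an $m$-subset $I\subseteq B_F$: the image of $F$ becomes the whole rank-$m$ space and you get $\princext{m-1}{q}{m}$. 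If $r_M(F)<m$, then $|B\setminus B_F|\geq m-2$, so you can contract $r_M(F)-2$ elements of $B_F$ (bringing $F$ to a line) together with enough of $B\setminus B_F$ to land at rank $m$, giving $\princext{m-1}{q}{2}$. The doubling to rank $2m$ is exactly what guarantees enough room for the contractions — this is the role of $2m$ that your sketch gestures at but does not pin down. Your overall plan (reduce to a single-element extension, reduce rank by contracting inside the ambient $\PG$) is in the right spirit, but the mechanism you propose for why exactly two target minors appear — a representability trichotomy — would fail, and you miss the one-line modularity fact that makes the whole argument collapse to a clean case split on $r_M(F)$.
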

\begin{proof}
	Since every flat in a projective geometry is modular, we know that $M$ is a principal extension of some flat $F$ of $\PG(2m-1,q)$.  Let $B$ be a basis for $\PG(2m-1)$ containing a basis $B_F$ for $F$. Since $M$ is not $\GF(q)$-representable, we have $r_M(F) \ge 2$. If $r_M(F) \ge m$, then $\si(M \con (B- I)) \cong \princext{m-1}{q}{m}$, where $I$ is an $m$-element subset of $B_F$. If $r_M(F) < m$, then $|B-B_F| \ge m-2$; it now follows that $\si(M \con (J_1 \cup J_2)) \cong \princext{m-1}{q}{2}$, where $J_1 \subseteq B_F$ and $J_2 \subseteq B-B_F$ satisfy $|J_1| = r_M(F)-2$ and $|J_2| = m-|J_1|$. 
\end{proof}

We now restate and prove Theorem~\ref{main}:

\begin{theorem}\label{maintech}
	Let $q$ be a prime power and let $m,\ell \ge 2$ be integers. If $M \in \cU(\ell)$ is $q$-dense and $r(M)$ is sufficiently large, then $M$ has a minor isomorphic to $\princext{m-1}{q}{2},\princext{m-1}{q}{m}$, or $\PG(m-1,q')$ for some $q' > q$. 
\end{theorem}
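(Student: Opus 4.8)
The plan is to reduce, via Lemma~\ref{unavoidable}, to exhibiting as a minor of $M$ either a projective geometry $\PG(m-1,q')$ with $q'>q$, or a simple rank-$2m$ matroid $M'$ having an element $f$ with $M'\del f\cong\PG(2m-1,q)$. Such an $M'$ is not $\GF(q)$-representable, since a simple $\GF(q)$-representable matroid of rank $2m$ has at most $|\PG(2m-1,q)|$ points while $|M'|=|\PG(2m-1,q)|+1$; hence Lemma~\ref{unavoidable} yields a minor isomorphic to $\princext{m-1}{q}{2}$ or $\princext{m-1}{q}{m}$, which is what we want. Accordingly I may assume from now on that $M$ has no $\PG(m-1,q')$-minor with $q'>q$.

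First I would apply Lemma~\ref{reduction} with $t$ chosen large in terms of $m$, $q$, $\ell$ (using that $r(M)$ is large enough to meet its hypothesis) to replace $M$ by a restriction $M_0\in\cU(\ell)$ that is $q$-dense, satisfies $r(M_0)\ge t$, and has every cocircuit of rank at least $r(M_0)-1$. Since $\elem(M_0)>\tfrac{q^{r(M_0)}-1}{q-1}\ge q^{r(M_0)-1}$, we have $\elem(M_0)\ge q^{-1}q^{r(M_0)}$, so Theorem~\ref{dhj} applies with $\alpha=q^{-1}$ and $n$ any fixed integer with $n\ge m$. Taking $n$ suitably large in terms of $m$: either $M_0$ has a $\PG(n-1,q')$-minor for some $q'>q$ — hence a $\PG(m-1,q')$-minor, a contradiction — or $M_0$ has an $\AG(n-1,q)$-restriction $N_0=M_0|A$ with $r(N_0)=n$, which I now assume.

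It remains to manufacture a proper extension of $\PG(2m-1,q)$ as a minor, and this is the heart of the matter. The basic move is to contract a rank-$(n-2m)$ affine subflat $I$ of $N_0$: since the simplification of $\AG(n-1,q)$ contracted by a rank-$j$ affine subflat is $\PG(n-j-1,q)$, the matroid $\si(M_0/I)$ has a rank-$2m$ restriction $P\cong\PG(2m-1,q)$. Provided the contraction preserves $q$-density, $\si(M_0/I)$ is a $q$-dense matroid in which the rank-$2m$ flat $\cl(P)$ is at least as dense as $\PG(2m-1,q)$; one then wants to show $\cl(P)$ contains a point off $P$, for then restricting to $P$ together with such a point gives the required $M'$ and Lemma~\ref{unavoidable} finishes. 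To produce this extra point — equivalently, to rule out that $P$ is a flat whose closure contributes nothing new — I expect to use the large-cocircuit property of $M_0$, which prevents $M_0$ from behaving like a low-rank dense flat attached loosely to the rest of the matroid; concretely, one may first contract a few elements not in $\cl_{M_0}(A)$ (keeping $q$-density, by Lemma~\ref{longline}), which drags points of the ambient matroid into $\cl_{M_0}(A)$ and hence, after the projectivising contraction, into $\cl(P)$.

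The one thing that can go wrong, and which I expect to be the main obstacle, is that $q$-density may fail somewhere along these contractions. By Lemma~\ref{longline} this forces a $U_{2,q+2}$-restriction through a contracted element, i.e.\ a line $L$ with at least $q+2$ points. When the contracted element lies in $A$, the set $L\cap A$ is either a single point or a full line of the affine geometry $N_0$ (two points of $A$ on a common line of $M_0$ span a line of $N_0$, which has exactly $q$ points). In the latter case I would choose the affine subflat $I$ above to be skew to $L\cap A$; then the image of $L\cap A$ is a line of $\si(N_0/I)\cong P$ that carries an extra point of $\si(M_0/I)$, so restricting to $P$ and that point again produces a proper extension of $\PG(2m-1,q)$. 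The remaining possibilities — $L\cap A$ a single point, or $L$ through an element outside $\cl_{M_0}(A)$ — form the delicate case; here one wants to combine the large-cocircuit condition with Kung's bound (Corollary~\ref{kung}), which caps how much of the density of $M_0$ can live outside the affine part, to rule out that such long lines obstruct the construction. In every case one arrives at a minor isomorphic to $\PG(m-1,q')$ with $q'>q$, to $\princext{m-1}{q}{2}$, or to $\princext{m-1}{q}{m}$, as required.
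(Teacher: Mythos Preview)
Your framework matches the paper's: apply Lemma~\ref{reduction} to obtain a $q$-dense restriction with large cocircuits, invoke Theorem~\ref{dhj} to get an $\AG$-restriction $R$ (or a large $\PG$ over a bigger field), and then aim for a non-$\GF(q)$-representable extension of $\PG(2m-1,q)$ so that Lemma~\ref{unavoidable} finishes. The gap is in the ``heart of the matter'': you do not actually produce the extra point in $\cl(P)$. Contracting an affine subflat $I$ lands you in a matroid of rank $r(M_0)-(n-2m)$, which may still be far larger than $2m$, so $q$-density of $\si(M_0/I)$ says nothing about the rank-$2m$ flat $\cl(P)$. Your remedy --- contract elements outside $\cl_{M_0}(A)$ until $A$ is spanning --- is exactly the right idea, but when density is lost you get a long line $L$ through the element being contracted, and contracting that element destroys $L$. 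Your appeal to ``the large-cocircuit condition with Kung's bound'' does not supply a mechanism here; Kung's bound is a global upper bound on $\elem$ and gives no control over where the long lines sit.

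The paper resolves this with a single clean device you are missing: take a \emph{minimal} minor $M_2$ of $M_1$ that (i) still has $R$ as a restriction, (ii) has all cocircuits of rank at least $r(M_2)-2$, and (iii) is either $q$-dense or has a $U_{2,q+2}$-restriction. One then argues by minimality that $R$ is spanning in $M_2$: if some $e\notin\cl_{M_2}(E(R))$ exists, either $M_2/e$ is $q$-dense (Lemma~\ref{longline}), contradicting minimality, or there is a long line $L$ through $e$; in that case the cocircuit condition (ii) guarantees an element $x$ outside both $\cl_{M_2}(E(R))$ and $\cl_{M_2}(L)$, and $M_2/x$ still satisfies (i)--(iii), again contradicting minimality. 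Once $R$ is spanning, contracting any $x\in E(R)$ yields a rank-$2m$ matroid with a $\PG(2m-1,q)$-restriction, and (iii) forces it to be non-$\GF(q)$-representable. This minimality trick is precisely the missing idea that turns your sketch into a proof; your case analysis on $L\cap A$ is unnecessary once you have it.
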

\begin{proof}
	Recall that the function $f$ was defined in Theorem~\ref{dhj}. Let $n_1 = f(\ell,2m+1,q,q^{-1})$ and let $n_0$ be an integer so that $(\sqrt{5}-1)^{n_0-1} \ge \ell^{n_1-1}$. We show that the conclusion holds whenever $r(M) \ge n_0$.
	
	Let $M \in \cU(\ell)$ be a $q$-dense matroid of rank at least $n_0$. By definition of $n_0$ and Lemma~\ref{reduction}, $M$ has a $q$-dense restriction $M_1$ such that $r(M_1) \ge n_1$ and every cocircuit of $M_1$ has rank at least $r(M_1)-1$. Note that $\elem(M_1) > q^{-1}q^{r(M_1)}$; by Theorem~\ref{dhj} and the definition of $n_1$, the matroid $M_1$ has an $\AG(2m,q)$-restriction $R$ or a $\PG(2m,q')$-minor for some $q' > q$. In the latter case, the theorem holds. In the former case, let $M_2$ be a minimal minor of $M_1$ so that
	\begin{enumerate}
		\item\label{min1} $R$ is a restriction of $M_2$, 
		\item\label{min2} every cocircuit of $M_2$ has rank at least $r(M_2)-2$, and
		\item\label{min3} $M_2$ is either $q$-dense or has a $U_{2,q+2}$-restriction. 
	\end{enumerate}
	Note that $r(M_2) \ge r(R) \ge 5$, and that contracting any element not spanned by $E(R)$ gives a matroid satisfying (\ref{min1}) and (\ref{min2}). We argue that $R$ is spanning in $M_2$; suppose not, and let $e \in E(M_2) - \cl_{M_2}(E(R))$.  If $M_2$ has a $U_{2,q+2}$-restriction $M_2|L$ containing $e$, then since $r(M_2) \ge 5$, the set $\cl_{M_2}(L)$ contains no cocircuit of $M_2$ and there is hence some $x \in E(M_2) - (\cl_{M_2}(E(R)) \cup \cl_{M_2}(L))$. Therefore $(M \con x)|L \cong U_{2,q+2}$, contradicting minimality. Thus, $M_2$ has no $U_{2,q+2}$-restriction containing $e$, so Lemma~\ref{longline} implies that $M_2 \con e$ is $q$-dense, again contradicting minimality; therefore $R$ is spanning in $M_2$.
	
	 If $x \in E(R)$, then $M_2 \con x$ is a rank-$2m$ matroid with a $\PG(2m-1,q)$-restriction; it is thus enough to show that $M_2 \con x$ is non-$\GF(q)$-representable for some such $x$, as the theorem then follows from Lemma~\ref{unavoidable}. If $M_2$ has a $U_{2,q+2}$-restriction  $M_2|L$, then any $x \in E(R) - L$ will do, since $(M_2 \con x)|L$ is not $\GF(q)$-representable. Otherwise, by Lemma~\ref{longline}, the matroid $M_2 \con x$ is $q$-dense for any $x \in E(R)$; again this implies non-$\GF(q)$-representability.
\end{proof}

\section{Lines, Spikes and Swirls}\label{corsection}

In this section, we restate and prove our four corollaries. 

\begin{theorem} If $\ell \ge 2$ is an integer, then $h_{\cU(\ell)}(n) = \frac{q^n-1}{q-1}$ for all sufficiently large $n$, where $q$ is the largest prime power not exceeding $\ell$. 
\end{theorem}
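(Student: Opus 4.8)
The plan is to derive the result directly from Theorem~\ref{maintech} together with Kung's bound (Corollary~\ref{kung}). Let $q$ be the largest prime power with $q \le \ell$. The upper bound direction: suppose for contradiction that $h_{\cU(\ell)}(n) > \frac{q^n-1}{q-1}$ for arbitrarily large $n$, i.e. there exist $q$-dense matroids in $\cU(\ell)$ of arbitrarily large rank. Pick $m$ large enough that $\PG(m-1,q')$ has a $U_{2,\ell+2}$-minor for every prime power $q' > q$; since $q$ is the largest prime power $\le \ell$, any such $q'$ satisfies $q' \ge \ell+1$, so a single line of $\PG(m-1,q')$ already has $q'+1 \ge \ell+2$ points, and $m = 2$ suffices. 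Similarly, for this $m$, each of $\princext{m-1}{q}{2}$ and $\princext{m-1}{q}{m}$ — wait, these need not contain a long line; so instead choose $m$ large enough that both principal-extension matroids have rank exceeding $2$ and contain a simplified restriction forcing $U_{2,\ell+2}$. Actually the cleaner route: apply Theorem~\ref{maintech} with this $\ell$ and with $m$ chosen so that $\PG(m-1,q)$ itself fails to lie in $\cU(\ell)$ — but $\PG(m-1,q) \in \cU(\ell)$ always since $q \le \ell$. So the principal extensions are the obstruction.

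Here is the fix I would actually carry out. Note $\princext{m-1}{q}{2}$ contains a line with $q+2$ points (the rank-$2$ flat $F$ together with $e$), and similarly $\princext{m-1}{q}{m}$ contains a line with $q+2$ points. If $q + 2 > \ell + 2$, i.e. $q > \ell$, these would be excluded — but $q \le \ell$, so that does not immediately work either. The correct observation is subtler: I would instead take $m$ with $m \ge 3$ and use that a further minor of $\princext{m-1}{q}{2}$ or $\princext{m-1}{q}{m}$ produces a line with more than $q+1$ points only after contraction collapses points onto a line. Concretely, in $\princext{m-1}{q}{m}$ one can contract a flat to merge $e$ with a full $\PG$-line, and if $q$ is chosen... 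Let me instead simply invoke: if $\cU(\ell)$ contained $q$-dense matroids of unbounded rank, Theorem~\ref{maintech} (applied with any fixed large $m$) would place in $\cU(\ell)$ a minor isomorphic to $\princext{m-1}{q}{2}$, $\princext{m-1}{q}{m}$, or $\PG(m-1,q')$ with $q' > q$. The last is impossible for $m \ge 2$ since $\PG(m-1,q')$ has a line of $q'+1 \ge \ell+2$ points, hence a $U_{2,\ell+2}$-minor. For the first two: since $\cL^{\circ}(q)$ and $\cL^{\lambda}(q)$ are minor-closed and one computes (as stated in the excerpt) $h_{\cL^{\circ}(q)}(n) = \frac{q^{n+1}-1}{q-1}$ and $h_{\cL^{\lambda}(q)}(n) = \frac{q^{n+1}-1}{q-1} - q$, both classes contain $q$-dense matroids of all ranks, so no contradiction arises there directly — the point must be that these classes are \emph{not} contained in $\cU(\ell)$. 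And indeed $\princext{n-1}{q}{n}$ for large $n$: does it have a $U_{2,\ell+2}$-minor? Contracting a basis of the rank-$n$ flat $F$ minus two elements leaves $e$ together with the image of $F$, which is a line of $\PG$ plus $e$: that is $U_{2,q+2}$. So $\princext{n-1}{q}{n}$ has a $U_{2,q+2}$-minor, but $q + 2 \le \ell + 2$, so this need not be excluded from $\cU(\ell)$.

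So the honest main obstacle, and where I would spend the real effort, is showing that $\cL^{\circ}(q)$ and $\cL^{\lambda}(q)$ are \emph{not} subclasses of $\cU(\ell)$ when $q$ is the largest prime power $\le \ell$ — equivalently, exhibiting a $U_{2,\ell+2}$-minor inside some $\princext{m-1}{q}{k}$. The key fact I would use is that between $q$ and $\ell$ there may be room: if $\ell \ge q+1$ is \emph{not} a prime power but $q$ is the largest prime power below it, then I must build a $U_{2,\ell+2}$-minor, i.e.\ a line with $\ell+2$ points, from a $\GF(q)$-geometry-with-one-extra-point. In $\princext{m-1}{q}{m}$ with $m$ large, contract a coline of the big flat $F$ to obtain a rank-$2$ minor in which $e$ sits on a $(q+1)$-point line; that gives only $q+2$ points, short of $\ell + 2$. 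Hence in fact $\cL^{\circ}(q) \subseteq \cU(\ell)$ fails only when we can find many independent long lines — which we cannot. This forces me to conclude the theorem must instead be proved by the reverse argument: $\cU(\ell) \subseteq \cL(\ell)$-type containment is false, so one shows $h_{\cU(\ell)}(n) \ge \frac{q^n-1}{q-1}$ trivially (as $\PG(n-1,q) \in \cU(\ell)$) and $h_{\cU(\ell)}(n) \le \frac{q^n-1}{q-1}$ eventually by Theorem~\ref{maintech}: if it exceeded this infinitely often, we get one of the three minors; $\PG(m-1,q')$ is out as above; and for the principal extensions, choose $m$ so large that $\twothirds{m}$-style rank arguments combined with $\princext{m-1}{q}{k}$ having a spanned restriction isomorphic to $\PG(\twothirds{m}-1, q)$ plus enough structure forces, upon a suitable contraction, a line of size $> \ell+1$ — this last combinatorial extraction, guaranteeing $\ell+2$ points on a line inside a bounded-rank minor of the principal extension when $\ell \ge q+1$, is the crux and the step I expect to be hardest; I would handle it by a direct geometric construction of the required long line in $\princext{m-1}{q}{2}$ and $\princext{m-1}{q}{m}$ for all $m$ past a threshold depending on $\ell$.
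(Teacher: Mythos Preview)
Your proposal has a genuine gap at exactly the point you flag as ``the crux'': showing that $\cL^{\circ}(q)$ and $\cL^{\lambda}(q)$ are not contained in $\cU(\ell)$. You repeatedly try to find a long line by contracting elements \emph{of the underlying projective geometry}, and this only ever yields $(q+2)$-point lines, which as you note need not exceed $\ell+2$. The idea you are missing is to contract the \emph{extra} element $e$ instead. In $\princext{2}{q}{3}$, the element $e$ is freely placed, so $\princext{2}{q}{3}/e$ is the truncation of $\PG(2,q)$, namely $U_{2,q^2+q+1}$. In $\princext{2}{q}{2}$, contracting $e$ collapses the $(q+1)$ points of its line to a single parallel class while leaving the remaining $q^2$ points of $\PG(2,q)$ pairwise nonparallel, so $\si(\princext{2}{q}{2}/e) \cong U_{2,q^2+1}$. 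Thus $U_{2,q^2+1}$ lies in both $\cL^{\circ}(q)$ and $\cL^{\lambda}(q)$.

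The second missing ingredient is the arithmetic fact that makes this sufficient: since $q^2$ is itself a prime power strictly larger than $q$, and $q$ was chosen as the largest prime power not exceeding $\ell$, we have $q^2 > \ell$, hence $q^2+1 \ge \ell+2$. So $U_{2,\ell+2}$ is a restriction of $U_{2,q^2+1}$, and neither $\cL^{\circ}(q)$ nor $\cL^{\lambda}(q)$ is contained in $\cU(\ell)$. This is precisely how the paper proceeds: it observes that if the theorem fails then Theorem~\ref{main2} forces $U_{2,q^2+1} \in \cU(\ell)$ or $U_{2,q'+1} \in \cU(\ell)$ for some prime power $q' > q$, and since any such $q'$ satisfies $\ell < q' \le q^2$, both cases yield $U_{2,\ell+2} \in \cU(\ell)$, a contradiction. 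Your handling of the $\PG(m-1,q')$ case is correct and matches the paper; only the principal-extension cases needed the observation above.
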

\begin{proof}
	Note that $\cL(q) \subseteq \cU(\ell)$, giving $\frac{q^n-1}{q-1} \le h_{\cU(\ell)}(n)< \infty$ for all $n$. If the result fails, then by Theorem~\ref{main2} we have either $U_{2,q^2+1} \in \cM$ or $U_{2,q'+1} \in \cM$, where $q'$ is the smallest prime power such that $q' > \ell$. Clearly $q^2 \ge q' \ge \ell+1$; it follows that $U_{2,\ell+2} \in \cU(\ell)$, a contradiction.
	\end{proof}

	 Our other corollaries depend on representability of free spikes and swirls. It can be easily shown that the free spike $\Lambda_k$ is representable over a field $\GF(q)$ if and only if there exist nonzero $\alpha_1,\alpha_2, \dotsc, \alpha_{k-1},\beta _1,\beta_2\in \GF(q)$ so that $\beta_1 \ne \beta_2$ and no sub-multiset of the $\alpha_i$ has sum equal to $\beta_1$ or $\beta_2$. The problem for $\Delta_k$ is analogous, but with products in the multiplicative group $\GF(q)^*$. Both problems are trivial unless the relevant group is of prime order, as one can choose the $\alpha_i$ in a subgroup not containing the $\beta_i$. Similarly, if the group has size at least $k+2$, one can choose the $\alpha_i$ all equal. The details for the prime-order case were dealt with in [\ref{govw02}, Lemma 11.6]; the following lemma summarises the consequences:
	
	\begin{lemma}\label{spikerep}
		If $k \ge 3$ is an integer and $q \ge 3$ is a prime power, then
		\begin{enumerate}
			\item $\Lambda_k \in \cL(q)$ and only if $q$ is composite or $k \le q-2$. 	
			\item  $\Delta_k \in \cL(q)$ if and only if $q-1$ is composite or $k \le q-3$.
		\end{enumerate}
	\end{lemma}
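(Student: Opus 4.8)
The plan is to invoke the representability criterion recorded just before the lemma: for $\Lambda_k$ it concerns sub-multiset sums in the additive group $(\GF(q),+)$, and for $\Delta_k$ sub-multiset products in the multiplicative group $\GF(q)^*$ (with the identity available as the sum, resp.\ product, of the empty sub-multiset, and $\beta_1,\beta_2$ forbidden from being the identity). Both directions can be read off from the structure of the relevant finite abelian group $G$: for spikes $G$ has order $q$ and is elementary abelian, so $G$ has a proper nontrivial subgroup exactly when $q$ is composite; for swirls $G=\GF(q)^*$ is cyclic of order $q-1$, so $G$ has a proper nontrivial subgroup exactly when $q-1$ is composite. This dichotomy is what makes ``composite'' appear in the statement.

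For the ``if'' direction I would simply make precise the two constructions already sketched before the lemma. If $q$ (resp.\ $q-1$) is composite, let $H$ be a proper nontrivial subgroup of $G$; since a proper subgroup has index at least $2$, one checks that $|G\del H|\ge 2$ throughout the relevant range, so we may take every $\alpha_i$ equal to a fixed non-identity element of $H$ and $\beta_1,\beta_2$ distinct elements of $G\del H$. Every sub-multiset sum (resp.\ product) then lies in $H$ and hence avoids $\beta_1$ and $\beta_2$. If instead $k\le q-2$ (resp.\ $k\le q-3$) --- equivalently $|G|\ge k+2$ --- take all $\alpha_i$ equal to a fixed non-identity $\gamma\in G$; the sub-multiset sums (resp.\ products) then form the set $\{j\gamma: 0\le j\le k-1\}$ of size at most $k\le|G|-2$, which contains the identity, leaving at least two non-identity elements of $G$ to serve as $\beta_1,\beta_2$. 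Together these cases account for exactly the representability asserted in the lemma.

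The substance of the lemma is the ``only if'' direction, namely that when $G$ is cyclic of prime order $\pi$ (so $\pi=q$ for spikes and $\pi=q-1$ for swirls) and the size bound fails --- giving $k-1\ge\pi-2$ parameters $\alpha_i$ --- no admissible configuration exists. Identify $G$ with $\bZ/\pi$; since each $\alpha_i$ is nonzero, each set $\{0,\alpha_i\}$ has exactly two elements, so iterating the Cauchy--Davenport theorem yields
\[
\bigl|\,\{0,\alpha_1\}+\dotsb+\{0,\alpha_{k-1}\}\,\bigr| \;\ge\; \min\bigl(\pi,\,1+(k-1)\bigr) \;\ge\; \pi-1 .
\]
Thus at most one element of $G$ is not a sub-multiset sum, and since the identity always is, at most one \emph{non-identity} element is missed; hence two distinct $\beta_1,\beta_2$ avoided by every sub-multiset sum cannot be chosen, and $\Lambda_k$ (resp.\ $\Delta_k$) is not $\GF(q)$-representable. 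This prime-order analysis is exactly [\ref{govw02}, Lemma~11.6]; the Cauchy--Davenport computation above is a self-contained substitute for it.

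I expect the Cauchy--Davenport estimate to be the only real step; everything else is bookkeeping. The one subtlety worth flagging is the off-by-one between the two parts --- the additive group has order $q$, the multiplicative group order $q-1$ --- which is responsible for the $q-2$ versus $q-3$ in the two bounds, and it is worth checking the boundary cases directly (for instance $\Lambda_k\notin\cL(3)$ and $\Delta_k\notin\cL(3),\cL(4)$ for every $k\ge3$, while $\Lambda_k\in\cL(4)$ and $\Delta_k\in\cL(5)$ for every $k$), since the hypothesis $q\ge3$ leaves some of these degenerate.
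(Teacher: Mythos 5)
Your proposal is correct, but it takes a genuinely different route for the crux of the lemma. The paper does not prove Lemma~\ref{spikerep} in a proof environment at all: it records the sub-multiset-sum/product criterion, sketches the two easy ``if'' constructions (pick the $\alpha_i$ in a proper nontrivial subgroup when the order is composite; pick them all equal when the group has at least $k+2$ elements), and then defers the hard ``only if'' case of prime group order entirely to the citation [\ref{govw02}, Lemma~11.6]. You reproduce the same ``if'' constructions (correctly noting that a proper nontrivial subgroup of a finite group of composite order always leaves at least two cosets, so $|G \setminus H| \ge 2$, and that in the large-group case the $k$ achievable sums $\{0,\gamma,\dotsc,(k-1)\gamma\}$ leave at least two nonzero elements free), but then you replace the black-box citation by a short iterated Cauchy--Davenport estimate $\lvert\{0,\alpha_1\}+\dotsb+\{0,\alpha_{k-1}\}\rvert \ge \min(\pi,k) \ge \pi - 1$, which forces the sub-multiset sums to miss at most one element of $\bZ/\pi$ and hence at most one non-identity element, so no admissible pair $\beta_1 \ne \beta_2$ exists. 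This is a genuine gain in self-containment: the paper's argument is shorter on the page but relies on an external lemma, whereas yours is complete and the arithmetic matches the asserted thresholds $k \le q-2$ (additive group of order $q$) and $k \le q-3$ (multiplicative group of order $q-1$) exactly, including the small cases $\pi = 2,3$ where the Cauchy--Davenport bound degenerates to the trivial observation that fewer than two non-identity elements are available.
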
  

	It is easy to see that $\cL^{\lambda}(q)$ contains every restriction of a matroid obtained from a matroid in $\cL(q)$ by principally truncating a line. Moreover, $\cL^{\circ}(q)$ contains all truncations of $\GF(q)$-representable matroids.  We can now show that these classes contain all free spikes: 

	\begin{lemma}\label{spikeext}
		If $q$ is a prime power and $k \ge 3$ is an integer, then $\Lambda_k \in \cL^{\lambda}(q) \cap \cL^{\circ}(q)$. 
	\end{lemma}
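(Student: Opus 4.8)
The plan is to produce $\Lambda_k$ from a $\GF(q)$-representable matroid by the operations that the preceding paragraph tells us $\cL^\lambda(q)$ and $\cL^\circ(q)$ are closed under, namely truncation (for $\cL^\circ(q)$) and principal truncation of a line followed by restriction (for $\cL^\lambda(q)$). Recall that the free spike $\Lambda_k$ has rank $k$, has $2k$ non-tip elements organised into $k$ ``legs'' $\{x_i,y_i\}$ spanning lines through a common tip, and its only nontrivial dependencies are the circuits $\{x_i,y_i,x_j,y_j\}$; equivalently, $\Lambda_k$ with the tip deleted is the rank-$k$ matroid whose circuits are exactly the $4$-element sets $\{x_i,y_i,x_j,y_j\}$. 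The key observation is that this ``tipless free spike'' is itself obtained from a free object by a single truncation: starting from the rank-$(k+1)$ matroid that is the direct sum / parallel-connection gadget of $k$ three-point lines (or more simply, from a $\GF(q)$-representable matroid of rank $k+1$ containing $k$ disjoint long lines in ``general position''), one truncation down to rank $k$ creates precisely the dependencies $\{x_i,y_i,x_j,y_j\}$ and no others, because in a truncation a set is dependent iff it was dependent before or its closure had full rank.

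First I would fix a convenient $\GF(q)$-representable starting matroid. Take $N_0$ to be the rank-$(k+1)$ matroid obtained as the direct sum of $k$ copies of $U_{2,3}$ together with possibly one extra coloop — more precisely, I want a $\GF(q)$-representable matroid of rank $k+1$ with $k$ pairwise skew lines $L_1,\dots,L_k$ each containing at least two of our chosen points, arranged so that $L_1,\dots,L_k$ together span a flat of rank exactly $\min(2k,k+1)$... actually the cleanest choice: let $N_0$ be $\GF(q)$-representable of rank $k+1$ with ground set $\{x_1,y_1,\dots,x_k,y_k\}$ where the only dependency is that $\{x_1,y_1,\dots,x_k,y_k\}$ spans a rank-$(k+1)$ flat and the $k$ pairs are ``as independent as possible''; for $q\ge 2$ a generic representation over a large enough extension works, but since we only need $\GF(q)$-representability for a fixed small $q$, I would instead build $N_0$ explicitly as a restriction of $\PG(k,q)$, choosing $x_i,y_i$ on $k$ lines through a common point or in sufficiently general position. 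Then $\Lambda_k$ minus its tip equals the truncation $T(N_0)$ to rank $k$: a circuit of $T(N_0)$ is a minimal set that is either a circuit of $N_0$ or has rank $\ge k$ in $N_0$; with the right $N_0$ the minimal such sets are exactly the $\{x_i,y_i,x_j,y_j\}$, $i\neq j$. This gives $\Lambda_k\del(\text{tip}) \in \cL^\circ(q)$ since $\cL^\circ(q)$ contains all truncations of $\GF(q)$-representable matroids; but I actually need $\Lambda_k$ itself, with the tip.

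To get the tip I would instead arrange for the long line to be truncated to carry the role of the tip. Here is the cleaner route for $\cL^\circ(q)$: start from a $\GF(q)$-representable matroid $N_1$ of rank $k+1$ containing a point $t$ and $k$ lines $L_1,\dots,L_k$ through $t$, with $x_i,y_i\in L_i\setminus t$, such that $\{t\}\cup\bigcup_i\{x_i,y_i\}$ is placed so the only dependencies of $N_1$ among these points are the obvious ones (the three collinear points on each $L_i$) plus the span being rank $k+1$; such an $N_1$ sits inside $\PG(k,q)$ when $q\ge\cdots$, but for small $q$ we take a generic representation over $\GF(q)$ after checking the required ``genericity'' is a nonvanishing-polynomial condition that $\GF(q)$ with $q$ a fixed prime power may be too small to satisfy — and this is exactly the obstacle. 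The honest fix, and the one I expect the paper uses, is to build $\Lambda_k$ as a restriction of a principal-truncation-of-a-line construction: take $\GF(q)$-representable $P$ of rank $k$ containing a line $L$ and points $z_1,\dots,z_k$ with $z_i\notin L$ in sufficiently general position; principally truncate $L$ by adding a point $t$ in the closure of $L$ after truncation, i.e. pass to the principal truncation, obtaining a rank-$(k-1)$... no — I should principally truncate so that the new element $t$ plays the tip, and then the legs $\{t,z_i\}$ with a second point $x_i$ on each leg appear after restricting. Because $\cL^\lambda(q)$ contains every restriction of a matroid obtained from $\cL(q)$ by principally truncating a line, it suffices to exhibit $\Lambda_k$ as such a restriction; and similarly $\cL^\circ(q)$ contains all truncations, so exhibiting $\Lambda_k$ (or a matroid having it as a restriction) as a truncation of a $\GF(q)$-representable matroid suffices.

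\medskip

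The main obstacle, as flagged, is the small-field genericity issue: a ``generic'' configuration of $k$ lines in $\PG(k-1,q)$ or $\PG(k,q)$ need not exist over a fixed $\GF(q)$ when $q$ is small relative to $k$, so I cannot simply invoke a generic representation. The resolution I would pursue is to observe that the free spike $\Lambda_k$ is actually a \emph{single} truncation (for $\cL^\circ$) or principal line-truncation plus restriction (for $\cL^\lambda$) of the \emph{free-est} rank-$(k+1)$ matroid on the relevant ground set, and that this free-est matroid is itself representable over every field — indeed over $\GF(2)$ — because a rank-$(k+1)$ matroid consisting of $k$ disjoint three-point lines plus independence otherwise is a direct sum $U_{2,3}\oplus\cdots\oplus U_{2,3}$ (with a coloop if needed), which is $\GF(q)$-representable for all $q\ge 2$. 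Thus: (1) verify $\cL^\circ(q)$ and $\cL^\lambda(q)$ contain direct sums of $U_{2,3}$'s and their truncations / principal line-truncations — immediate from the stated closure properties since these little matroids are $\GF(q)$-representable; (2) check the matroid identity $T^{(k)}(U_{2,3}^{\oplus k}) = \Lambda_k\del(\text{tip})$ and, with the tip, the identity that principally truncating the appropriate line of $U_{2,3}^{\oplus k}$ and restricting yields $\Lambda_k$ — this is a finite, explicit rank-function computation: a set is a circuit of the truncation iff it is a minimal set whose $N_0$-rank is $\ge k$ or is an $N_0$-circuit, and one checks these minimal sets are exactly $\{x_i,y_i\}\cup\{x_j,y_j\}$ together with (after adding $t$) the legs; (3) conclude. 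Step (2) is the only place real work happens, and it is routine rank bookkeeping once the starting matroid and the location of the truncated line/tip are pinned down precisely.
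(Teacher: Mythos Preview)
Your strategy is right---exhibit $\Lambda_k$ as a single truncation of something in $\cL(q)$, and as a restriction of a principal line-truncation of something in $\cL(q)$---but your candidate starting matroid is wrong, and this is a genuine gap rather than a detail to be filled in.

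The direct sum $U_{2,3}^{\oplus k}$ has rank $2k$, not $k+1$; a single truncation lands in rank $2k-1$, nowhere near $\Lambda_k$. You cannot iterate: the stated closure property is that $\cL^{\circ}(q)$ contains truncations of $\GF(q)$-representable matroids, but $T(U_{2,3}^{\oplus k})$ is already non-$\GF(q)$-representable (it contains $U_{4,6}$), so a second truncation is not covered. Moreover, even granting iterated truncation, if you keep two points $x_i,y_i$ from each $U_{2,3}$ summand you are truncating the free matroid $U_{2k,2k}$ down to rank $k$, which gives $U_{k,2k}$, not the tipless spike---you have lost the $4$-circuits $\{x_i,y_i,x_j,y_j\}$ entirely. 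So the identity ``$T^{(k)}(U_{2,3}^{\oplus k}) = \Lambda_k\setminus(\text{tip})$'' is false.

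What you need is a $\GF(q)$-representable matroid of rank exactly $k+1$ on the $2k$ elements $\{x_i,y_i\}$ whose only circuits are the sets $\{x_i,y_i,x_j,y_j\}$. You circled close to it with ``parallel-connection gadget of $k$ three-point lines'' but then abandoned it. The paper takes $M = M(K_{2,k})$: label the edges at one side $x_1,\dots,x_k$ and at the other $y_1,\dots,y_k$; then $M$ is regular (so $\GF(q)$-representable for every $q$, bypassing your small-field worry), has rank $k+1$, and its circuits are precisely the $4$-cycles $\{x_i,y_i,x_j,y_j\}$. One truncation gives $\Lambda_k$, so $\Lambda_k\in\cL^{\circ}(q)$. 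For $\cL^{\lambda}(q)$, the paper extends $M(K_{2,k})$ over $\GF(q)$ by a point $e$ placed freely on the hyperplane $E(H)$ for a $K_{1,k}$-subgraph $H$, and then principally truncates the line through $e$ and some $f\in E(H)$; deleting $e$ from the result yields $\Lambda_k$. (Note that in this paper $\Lambda_k$ is the tipless free spike, so your concern about producing the tip is moot.)
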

	\begin{proof}
		Let $G \cong K_{2,k}$ and let $M = M(G)$. The free spike $\Lambda_k$ is the truncation of the regular matroid $M$, so $\Lambda_k \in \cL^{\circ}(q)$. Let $H$ be a $K_{1,k}$-subgraph of $G$. For each prime power $q$, let $\widehat{M}$ be a $\GF(q)$-representable extension of $M$ by a point $e$ spanned by $E(H)$ but no proper subset of $E(H)$. Now we have $\Lambda_k \cong \widehat{M}' \del e$ , where $\widehat{M}'$ is obtained from $\widehat{M}$ by principally truncating the line spanned by $\{e,f\}$ for some $f \in E(H)$. Therefore $\Lambda_k \in \cL^{\lambda}(q)$.  	\end{proof}
	
	The same does not hold, however, for free swirls:
		
	\begin{lemma}\label{swirlext}
		If $q \ge 3$ is a prime power and $k \ge 4$ is an integer, then 
		\begin{itemize}
			\item $\Delta_k \in \cL^{\lambda}(q)$.
			\item $\Delta_k \in \cL^{\circ}(q)$ if and only if $\Delta_k \in \cL(q)$. 
		\end{itemize}
	\end{lemma}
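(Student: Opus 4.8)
I would prove the two statements by separate arguments, in both cases using the standard description (see [\ref{govw02}]) of the free swirl: $\Delta_k$ has rank $k$ and $2k$ elements $x_1,y_1,\dots,x_k,y_k$, and it is the freest matroid in which each ``side'' $\{x_i,y_i\}$ spans a line, these $k$ lines being cyclically arranged so that consecutive lines meet in a common point (a ``corner'', not an element of $\Delta_k$); equivalently $\Delta_k$ is obtained from a polygon $v_1v_2\cdots v_k$ (indices mod $k$) by freely placing two points on each side $\cl(\{v_i,v_{i+1}\})$. Note that for $k\ge 4$ the set $\{x_{i-1},y_{i-1},x_i,y_i\}$ is a $4$-element circuit of rank $3$ for every $i$.

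For the first statement I would realise $\Delta_k$ as a restriction of a principal truncation of a line in a $\GF(q)$-representable matroid, so that the lemma follows from the remark recorded just above, that $\cL^{\lambda}(q)$ is closed under such restrictions. In $\PG(k,q)$ (rank $k+1$) fix points $v_1,\dots,v_{k+1}$ in general position and, for each $i\in\{1,\dots,k\}$, two further points $x_i,y_i$ on the line $\cl(\{v_i,v_{i+1}\})$ distinct from $v_i,v_{i+1}$ --- this needs at least $q+1\ge 4$ points on a line, which is exactly why $q\ge 3$ is required. The restriction $N$ of $\PG(k,q)$ to these points is the ``path'' version of the swirl (a chain of $k$ lines rather than a cycle), which has no closing constraint; now set $L=\cl_N(\{v_1,v_{k+1}\})$, the line that would close the path into a cycle, and let $M=(N+_e L)\con e$ be the principal truncation of $N$ along $L$. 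Contracting the generically placed point $e$ makes $v_1$ and $v_{k+1}$ parallel, hence identifies them and turns the path of $k$ lines into a cycle of $k$ lines; one then checks, by comparing rank functions, that $M\,|\,\{x_1,y_1,\dots,x_k,y_k\}\cong\Delta_k$. The point is that the cycle is closed by the generic point $e$ rather than by a scalar of $\GF(q)$, which is exactly why no obstruction to $\GF(q)$-representability appears.

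For the second statement the ``if'' direction is immediate since $\princext{n-1}{q}{n}\del e=\PG(n-1,q)$, so every $\GF(q)$-representable matroid lies in $\cL^{\circ}(q)$. For ``only if'', suppose $\Delta_k\in\cL^{\circ}(q)$; then $\Delta_k=(\PG(n-1,q)+e)\con C\del D$ for some $n$, where $e$ is freely placed and $C,D,\{e\}$ are pairwise disjoint. I would use three facts. (i) A freely placed element remains freely placed after contracting or deleting other elements (short closure-lattice argument: a free element lies in no non-spanning flat, and a non-spanning flat of $M\con C'$ or $M\del D$ comes from a non-spanning flat of $M$). (ii) $\Delta_k$ has no freely placed element: removing one point of a side and re-adding it freely destroys the $4$-circuit on that side, since for $k\ge 4$ the flat spanned by the other three of its points is non-spanning. (iii) $\Delta_k$ is not a truncation of any matroid: if $T(N')=\Delta_k$ with $r(N')=k+1$, then (as $4\le r(N')-1$) each $4$-circuit $\{x_{i-1},y_{i-1},x_i,y_i\}$ is still a circuit of $N'$, so the rank-$2$ lines $U_i:=\cl_{N'}(\{x_i,y_i\})$ satisfy $\dim(U_{i-1}+U_i)=3$, whence $U_{i-1}\cap U_i$ is a single point $v_i$; but then every element of $N'$ lies in the span of the at most $k$ points $v_1,\dots,v_k$ (cutting the cycle into arcs at the places where consecutive $v_i$ coincide shows this span has rank at most $k$), contradicting $r(N')=k+1$. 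Now split on $e$: if $e\in D$ then $\Delta_k$ is a minor of $\PG(n-1,q)$, hence in $\cL(q)$; if $e\notin C\cup D$ then by (i) $e$ would be a freely placed element of $\Delta_k$, contradicting (ii); and if $e\in C$ then, contracting $C\setminus\{e\}$ first and using that the free point survives, $\Delta_k=T(N_0)\del D$ where $N_0$ is a $\GF(q)$-representable minor of $\PG(n-1,q)$ with $r(N_0)\ge k+1\ge 5$, and considering whether $D$ lowers the rank of $N_0$ shows $\Delta_k$ is either a minor of $N_0$ (hence in $\cL(q)$) or equals $T(N')$ for a $\GF(q)$-representable $N'$ of rank $k+1$, which (iii) forbids.

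The genuinely delicate part is the verification in the first statement that the principal truncation restricts to precisely $\Delta_k$; this requires the exact definition of the free swirl from [\ref{govw02}] together with a careful (if routine) rank computation, and is where I expect most of the work. The ingredients of the second statement --- persistence of free placement under minors, the absence of a free element of $\Delta_k$, and the fact that $\Delta_k$ is not a truncation --- are comparatively clean.
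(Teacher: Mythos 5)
Your proof is correct and follows essentially the same strategy as the paper's, with two mild variations. For the first bullet, you build the ``open'' swirl $N$ as a restriction of $\PG(k,q)$ with explicit corner points $v_1,\dots,v_{k+1}$, whereas the paper builds the same matroid as a repeated $2$-sum of copies of $U_{2,4}$ (with only the two extreme corners $x_1,x_k$ retained); after the principal truncation of the closing line and deletion of the corners, the two constructions coincide, so this is a notational rather than substantive difference. For the second bullet, the paper asserts directly that if $\Delta_k\in\cL^{\circ}(q)\setminus\cL(q)$ then $\Delta_k$ is the truncation of a rank-$(k+1)$ matroid in $\cL(q)$; your proposal fills in the omitted case analysis on what happens to the freely-placed element $e$ of $\princext{n-1}{q}{n}$ under the minor operations (deleted, contracted, or retained), using the facts that free placement persists under minors and that $\Delta_k$ has no freely placed element (each element lies on the non-spanning line $\cl(\{x_i,y_i\})$). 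Your argument for step (iii), that $\Delta_k$ is not a truncation of a rank-$(k+1)$ matroid, invokes the representation of $N'$ in an ambient projective space to locate the intersection points $v_i$ of consecutive lines; the paper reaches the same conclusion purely via submodularity of the rank function ($r_N(P_{k-1}\cup P_1\cup P_k)\le 4$ forces $P_k\subseteq\cl_N(P_{k-1}\cup P_1)$, whence $r(N)\le k$), which is marginally more self-contained since it never leaves the matroid $N'$ itself. Both routes are valid; what your version buys is explicitness at exactly the point where the paper is terse.
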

	\begin{proof}
		Let $L_1,L_2, \dotsc, L_k$ be copies of $U_{2,4}$ so that $|E(L_i) \cap E(L_{i+1})| = 1$ for each $i \in \{1, \dotsc, k-1\}$ and $E(L_i) \cap E(L_j) = \varnothing$ for $|i-j| > 1$. Let $x_1 \in E(L_1) - E(L_2)$ and $x_k \in E(L_k) - E(L_{k-1})$. Let $N_k$ be defined by the repeated $2$-sum $L_1 \oplus_2 L_2 \oplus_2 \dotsc \oplus_2 L_k$. Clearly $N_k \in \cL(q)$, and $\widehat{N_k} \del \{x_1,x_k\} \cong \Delta_k$, where $\widehat{N_k}$ is the principal truncation of the line spanned by $x_1$ and $x_k$ in $N_k$ (that is, the principal extension of this line, followed by a contraction of the new element). Therefore $\Delta_k \in \cL^{\lambda}(q)$. 
		
		On the other hand, suppose that $\Delta_k$ is in exactly one of $\cL(q)$ and $\cL^{\circ}(q)$. Since $\cL(q) \subseteq \cL^{\circ}(q)$, it must be the case that $\Delta_k$ is the truncation of a rank-$(k+1)$ matroid $N \in \cL(q)$. Let $E(\Delta_k) = P_1 \cup \dotsc \cup P_k$, where the $P_i$ are pairwise disjoint two-element sets so that the union of any two cyclically consecutive $P_i$ is a circuit of $\Delta_k$, and the union of two any other $P_i$ is independent in $\Delta_k$. Since $r(N) \ge 5$ and $\Delta_k$ is the truncation of $N$, we thus have $N|(P_i \cup P_j)  = \Delta_k|(P_i \cup P_j)$ for all distinct $i$ and $j$. As $P_i \cup P_{i+1}$ is a circuit of $N$ for each $i < k$, an inductive argument gives $r_N(P_1 \cup \dotsc \cup P_{k-1}) \le k$. Similarly, $r_N(P_{k-1} \cup P_1 \cup P_k) \le 4$, so 
		$P_k \subset \cl_N(P_{k-1} \cup P_1)$ and $r(N) \le k$, a contradiction. 
	\end{proof}
		
	The fact that $\cL^{\circ}(q)$ need not contain all free swirls is the reason that Theorem~\ref{swirlthm} is more technical and less powerful than Theorem~\ref{spikethm}. We now restate and prove both these theorems:
		\begin{theorem}
		Let $k \ge 3$ and $\ell \ge 2$ be integers. If $\cM$ is the class of matroids with no $U_{2,\ell+2}$- or $\Lambda_k$-minor, then $h_{\cM}(n) = \frac{p^n-1}{p-1}$ for all sufficiently large $n$, where $p$ is the largest prime satisfying $p \le \min(\ell,k+1)$. 
		\end{theorem}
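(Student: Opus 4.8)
The plan is to prove the two inequalities $h_{\cM}(n) \ge \tfrac{p^n-1}{p-1}$ (for every $n$) and $h_{\cM}(n) \le \tfrac{p^n-1}{p-1}$ (for all sufficiently large $n$) separately; note first that $p$ is well defined since $\min(\ell,k+1)\ge 2$. For the \textbf{lower bound} I would show $\cL(p) \subseteq \cM$. Indeed, every $\GF(p)$-representable matroid has at most $p+1 \le \ell+1$ points on a line and hence no $U_{2,\ell+2}$-minor; and $\Lambda_k \notin \cL(p)$, because $p$ is prime with $p \le k+1 < k+2$, so Lemma~\ref{spikerep}(1) (when $p \ge 3$) or the observation that $\GF(2)$ contains no two distinct nonzero elements $\beta_1 \ne \beta_2$ (when $p = 2$) shows $\Lambda_k$ is not $\GF(p)$-representable. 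Thus $h_{\cM}(n) \ge |\PG(n-1,p)| = \tfrac{p^n-1}{p-1}$ for all $n$. Since also $\cM \subseteq \cU(\ell)$, Corollary~\ref{kung} gives $h_{\cM}(n) \le \tfrac{\ell^n-1}{\ell-1} < \infty$.

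For the \textbf{upper bound}, suppose it fails; then, combined with the lower bound and the finiteness just established, we have $\tfrac{p^n-1}{p-1} < h_{\cM}(n) < \infty$ for infinitely many $n$, so Theorem~\ref{main2} applied with $q = p$ shows that the (minor-closed) class $\cM$ contains $\cL^{\circ}(p)$, $\cL^{\lambda}(p)$, or $\cL(q')$ for some prime power $q' > p$. The first two possibilities are ruled out at once: Lemma~\ref{spikeext} gives $\Lambda_k \in \cL^{\circ}(p) \cap \cL^{\lambda}(p)$, so $\Lambda_k \in \cM$, contradicting the definition of $\cM$.

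The step I expect to be the \textbf{main obstacle} is the remaining possibility $\cL(q') \subseteq \cM$ with $q' > p$, since $q'$ need not exceed $\ell$; the resolution uses crucially that $p$ is the \emph{largest} prime not exceeding $\min(\ell,k+1)$. If $q'$ is not prime, then $q'$ is a composite integer, so $\Lambda_k \in \cL(q')$ by Lemma~\ref{spikerep}(1), contradicting $\cL(q') \subseteq \cM$. If $q'$ is prime, then $q' \ge 3$ and $q' > \min(\ell,k+1)$, so either $q' > \ell$ --- in which case $U_{2,\ell+2}$ is a minor of $U_{2,q'+1} = \PG(1,q') \in \cL(q') \subseteq \cM$, contradicting $\cM \subseteq \cU(\ell)$ --- or $q' > k+1$, i.e.\ $k \le q'-2$, in which case Lemma~\ref{spikerep}(1) again yields $\Lambda_k \in \cL(q') \subseteq \cM$, a contradiction. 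As one of these cases always occurs, the assumed failure of the upper bound is impossible, and the theorem follows.
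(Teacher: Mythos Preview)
Your proof is correct and follows essentially the same strategy as the paper's: establish $\cL(p)\subseteq\cM$ for the lower bound, then invoke Theorem~\ref{main2} and rule out $\cL^{\circ}(p),\cL^{\lambda}(p)$ via Lemma~\ref{spikeext} and $\cL(q')$ via Lemma~\ref{spikerep} together with the maximality of $p$. Your treatment of the case $\cL(q')\subseteq\cM$ is the contrapositive of the paper's (the paper deduces $q'$ prime and $q'\le\min(\ell,k+1)$ directly from $U_{2,\ell+2},\Lambda_k\notin\cL(q')$), and you are slightly more careful in handling $p=2$ separately, since Lemma~\ref{spikerep} is stated only for $q\ge 3$.
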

		\begin{proof}
		By Lemma~\ref{spikerep}, we have $\Lambda_k \notin \cL(p)$ and so $\cL(p) \subseteq \cM$ and $\frac{p^n-1}{p-1} \le h_{\cM}(n) < \infty$ for all $n$. If the result does not hold, then by Theorem~\ref{main2} the class $\cM$ contains $\cL^{\circ}(p), \cL^{\lambda}(p)$ or $\cL(q)$ for some prime power $q > p$. In the first two cases we have $\Lambda_k \in \cM$, a contradiction. In the last case, since $U_{2,\ell+2} \notin \cL(q)$ and $\Lambda_k \notin \cL(q)$, we know by Lemma~\ref{spikerep} that $q$ is prime and $q \le \min(\ell,k+1)$; this contradicts the maximality in our choice of $p$. 
		\end{proof}

	\begin{theorem}
		Let $2^{p}-1$ and $2^{p'}-1$ be consecutive Mersenne primes, and let $k$ and $\ell$ be integers for which  $2^p \le \ell < \min(2^{2p}+2^p,2^{p'})$ and $k \ge \max(4,2^p-2)$. If $\cM$ is the class of matroids with no $U_{2,\ell+2}$- or $\Delta_k$-minor, then $h_{\cM}(n) = \frac{2^{pn}-1}{2^p-1}$ for all sufficiently large $n$.
	\end{theorem}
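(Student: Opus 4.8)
The plan is to mirror the structure of the preceding proof for free spikes, using Theorem~\ref{main2} together with the representability and extension facts established for free swirls. First I would set $q = 2^p$, which is a prime power, and observe that $q - 1 = 2^p - 1$ is a Mersenne prime, hence prime; since $k \ge 2^p - 2 = q - 2 > q - 3$, Lemma~\ref{spikerep}(2) gives $\Delta_k \notin \cL(q)$. Combined with $\ell \ge 2^p = q$, which forces $U_{2,\ell+2} \notin \cL(q)$ as $q+2 \le \ell+2$, we get $\cL(q) \subseteq \cM$, so $\frac{q^n-1}{q-1} \le h_{\cM}(n) < \infty$ for all $n$. The lower bound half is then done; the work is in the upper bound.

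Next, suppose for contradiction that $h_{\cM}(n) > \frac{q^n-1}{q-1}$ for infinitely many $n$. By Theorem~\ref{main2}, $\cM$ contains $\cL^{\circ}(q)$, $\cL^{\lambda}(q)$, or $\cL(q'')$ for some prime power $q'' > q$. The case $\cL^{\lambda}(q) \subseteq \cM$ is immediately excluded: Lemma~\ref{swirlext} gives $\Delta_k \in \cL^{\lambda}(q)$ (using $k \ge 4$), contradicting $\Delta_k \notin \cM$. The case $\cL^{\circ}(q) \subseteq \cM$ is handled using the second bullet of Lemma~\ref{swirlext}: if $\Delta_k \in \cL^{\circ}(q)$ then $\Delta_k \in \cL(q)$, which we have already ruled out; so this case is also impossible. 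This leaves only $\cL(q'') \subseteq \cM$ with $q'' > q = 2^p$.

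For the remaining case I would argue that no admissible $q''$ exists. Since $\cL(q'') \subseteq \cM$, both $U_{2,\ell+2} \notin \cL(q'')$ and $\Delta_k \notin \cL(q'')$. The first gives $q'' + 1 \ge \ell + 2$, i.e.\ $q'' > \ell$; combined with $\ell < 2^{2p} + 2^p$ this yields $q'' \ge 2^{2p}+2^p$ — wait, more carefully, $q'' \ge \ell+1$ and $\ell$ can be as small as $2^p$, so I only get $q'' \ge 2^p + 1$ from this alone, and must instead use $\Delta_k \notin \cL(q'')$ via Lemma~\ref{spikerep}(2): this forces $q''-1$ prime and $k \le q'' - 3$. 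Now $q'' - 1$ is a prime strictly larger than $q - 1 = 2^p - 1$. If $q'' - 1$ is not a power of $2$ minus... here is the key point: an integer of the form $q'' - 1$ with $q''$ a prime power and $q'' - 1$ prime forces $q''$ to be even (if $q''$ is an odd prime power $\ge 3$ then $q'' - 1$ is even and $> 2$, hence composite), so $q'' = 2^s$ for some $s$, and $2^s - 1$ prime forces $s$ prime; thus $2^s - 1$ is a Mersenne prime exceeding $2^p - 1$, so $s \ge p'$ and $q'' \ge 2^{p'}$. But then $q'' > \ell$ (from $U_{2,\ell+2}$) combined with $\ell < 2^{p'}$ gives no contradiction directly; instead use $q'' \ge 2^{p'} > \ell$, consistent so far, and the hypothesis $\ell < \min(2^{2p}+2^p, 2^{p'})$: since $q'' \ge 2^{p'}$ we would need the $U_{2,\ell+2}$-exclusion, $q'' \ge \ell + 1$, which holds. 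So this chain does not immediately close — the real contradiction must come from $\Delta_k \in \cM$ being incompatible with $\cL(q'')$ for the specific $q''$, i.e.\ we need $\Delta_k \in \cL(2^{p'})$, and indeed since $\ell \ge 2^p$ and $\ell < 2^{p'}$... I would instead show directly that $\Delta_k \in \cL(q'')$ for every prime power $q'' > \ell$ in the relevant range, using that $q'' \ge \ell + 1 \ge 2^p + 1$ and the "all $\alpha_i$ equal" construction when $q'' \ge k + 2$ (valid since $k \le q'' - 3 < q'' - 2$ would need checking) or the subgroup construction when $q'' - 1$ is composite. The \textbf{main obstacle} is exactly this bookkeeping: pinning down that the combination of $q'' > \ell \ge 2^p$, $\ell < \min(2^{2p}+2^p, 2^{p'})$, and $k \ge \max(4, 2^p - 2)$ forces $\Delta_k \in \cL(q'')$ (contradicting $\cL(q'') \subseteq \cM$), which is where the bound $\ell < 2^{2p} + 2^p$ and the consecutiveness of the Mersenne primes both get used — via Lemma~\ref{spikerep}(2), either $q''-1$ is composite or $k \le q'' - 3$, and $q'' \ge \ell+1 > 2^p \ge k+2$ in the composite-exclusion subcase while $q''-1$ being a prime $> 2^p-1$ and a Mersenne-prime-forcing argument pushes $q''$ past $2^{p'} > \ell$, contradicting nothing about lines but forcing $k \le q''-3$ which is fine — so the genuinely delicate step is verifying $\Delta_k \in \cL(q'')$ outright, closing the case.
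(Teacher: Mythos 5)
Your treatment of the case $\cL^{\circ}(q)\subseteq\cM$ (where $q=2^p$) contains a genuine logical error. You argue: by Lemma~\ref{swirlext}, $\Delta_k\in\cL^{\circ}(q)$ would imply $\Delta_k\in\cL(q)$, which has been ruled out, ``so this case is also impossible.'' But what you have shown is only $\Delta_k\notin\cL^{\circ}(q)$, and that is entirely \emph{consistent} with $\cL^{\circ}(q)\subseteq\cM$; it does not rule the inclusion out. To derive a contradiction you must exhibit some matroid that \emph{is} in $\cL^{\circ}(q)$ but \emph{not} in $\cM$. The paper does this with the long line: since $\cL^{\circ}(q)$ contains all truncations of $\GF(q)$-representable matroids, it contains the rank-$2$ truncation of $\PG(2,q)$, i.e.\ $U_{2,q^2+q+1}=U_{2,2^{2p}+2^p+1}$. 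The hypothesis $\ell<2^{2p}+2^p$ then gives $\ell+2\le 2^{2p}+2^p+1$, so $U_{2,\ell+2}\in\cL^{\circ}(q)\subseteq\cM$, a contradiction. This is the only place the bound $\ell<2^{2p}+2^p$ is used, and your write-up never actually invokes it.

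Your third case ($\cL(q'')\subseteq\cM$ with $q''>2^p$) also drifts: you at one point deduce $q''>\ell$ from $U_{2,\ell+2}\notin\cL(q'')$, which is backwards (that condition yields $q''\le\ell$), and you end without a clean conclusion, conceding that ``the genuinely delicate step'' remains open. The intended argument is short: by Lemma~\ref{spikerep}(2), either $q''-1$ is composite, in which case $\Delta_k\in\cL(q'')\subseteq\cM$, contradiction; or $q''-1$ is prime, which (as $q''$ is a prime power $>4$) forces $q''$ even, hence $q''=2^s$ with $2^s-1$ a Mersenne prime exceeding $2^p-1$, so by consecutiveness $q''\ge 2^{p'}$, and then $\ell<2^{p'}\le q''$ gives $U_{2,\ell+2}\in\cL(q'')\subseteq\cM$, contradiction. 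Your lower-bound half and the $\cL^{\lambda}(q)$ case are correct and match the paper.
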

	\begin{proof}
		Since $\ell \ge 2^p$ and $k \ge 2^p-2$, we have $U_{2,\ell+2} \notin \cL(2^p)$ and $\Delta_k \notin \cL(2^p)$, so $\cL(2^p) \subseteq \cM$, giving $\tfrac{2^{np}-1}{2^p-1} \le h_{\cM}(n) < \infty$ for all $n$. If the result fails, then $\cM$ contains $\cL^{\circ}(2^p), \cL^{\lambda}(2^p)$ or $\cL(q)$ for some prime power $q > 2^p$. We have $U_{2,2^{2p}+2^p+1} \in \cL^{\circ}(2^p)$, and $\Delta_k \in \cL^{\lambda}(2^p)$ by Lemma~\ref{swirlext}. If $q-1$ is composite, then $\Delta_k \in \cL(q)$. If $q-1$ is prime then it is a Mersenne prime, so $q \ge 2^{p'}$, giving $U_{2,2^{p'}+1} \in \cL(q)$. Since $\ell < \min(2^{2p}+2^p,2^{p'})$, we have $U_{2,\ell+2} \in \cM$ or $\Delta_k \in \cM$ in all cases, a contradiction. 
	\end{proof}
	
	We cannot hope for such a simple theorem applying to all $\ell$; to see why, suppose that $p' > 2p$ (for example, if $(p,p') = (127,521)$). Then if $ 2^{2p} + 2^p \le \ell < 2^{p'}$ and $k \ge 2^p-2$, it follows from Lemmas~\ref{spikerep} and~\ref{swirlext} that $\cL^{\circ}(2^p) \subseteq \cM$ but $\cL(q) \not\subseteq \cM$ for all $q > 2^p$. The Growth rate theorem thus gives $\frac{2^{p(n+1)}-1}{2^p-1} \le h_{\cM}(n) \le c \cdot 2^{pn}$ for some constant $c$, so $h_{\cM}(n)$ does not eventually equal $\frac{q^n-1}{q-1}$ for any prime power $q$. 
	
	Finally, we prove Theorem~\ref{spikeswirl}:
	
	\begin{theorem}	Let $\ell \ge 3$ and $k \ge 3$ be integers. If $\cM$ is the class of matroids with no $U_{2,\ell+2}$-, $\Lambda_k$- or $\Delta_k$-minor, then $h_{\cM}(n) = \tfrac{1}{2}(3^n-1)$ for all sufficiently large $n$. 
   \end{theorem}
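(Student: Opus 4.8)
The plan is to follow exactly the template of the proofs of Theorems~\ref{spikethm} and~\ref{swirlthm}, specialising $q$ to $3$. First I would establish the lower bound: since $\ell \ge 3$ and $k \ge 3$, Lemma~\ref{spikerep} applies with $q = 3$. Here $q - 1 = 2$ is prime (not composite) and we need $k \le q - 3 = 0$ for $\Delta_k \in \cL(3)$, which fails; likewise $q = 3$ is prime and $k \le q - 2 = 1$ fails, so $\Lambda_k \notin \cL(3)$. Also $U_{2,\ell+2} \notin \cL(3)$ since $\ell + 2 \ge 5 > 4$. Hence $\cL(3) \subseteq \cM$, which gives $\tfrac{1}{2}(3^n - 1) = \frac{3^n - 1}{3 - 1} \le h_{\cM}(n) < \infty$ for all $n$ (finiteness from Corollary~\ref{kung}).

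For the upper bound, suppose the conclusion fails, so $h_{\cM}(n) > \frac{3^n-1}{3-1}$ for infinitely many $n$. Then Theorem~\ref{main2} (with $q = 3$) says $\cM$ contains $\cL^{\circ}(3)$, $\cL^{\lambda}(3)$, or $\cL(q')$ for some prime power $q' > 3$. In the first case, $\Lambda_k \in \cL^{\circ}(3)$ by Lemma~\ref{spikeext}, contradicting $\Lambda_k \notin \cM$. In the second case, $\Lambda_k \in \cL^{\lambda}(3)$ by Lemma~\ref{spikeext} again (or $\Delta_k \in \cL^{\lambda}(3)$ by Lemma~\ref{swirlext} when $k \ge 4$, but the free-spike version already suffices for all $k \ge 3$), again a contradiction. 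In the last case, $q' > 3$ is a prime power; I would apply Lemma~\ref{spikerep} to the field $\GF(q')$. Since $\Lambda_k \notin \cL(q')$, part~(1) forces $q'$ prime and $k \le q' - 2$; since $\Delta_k \notin \cL(q')$, part~(2) forces $q' - 1$ prime and $k \le q' - 3$. But $q'$ and $q' - 1$ are consecutive integers both at least $3$, so they cannot both be prime (one of them is even and exceeds $2$). This contradiction completes the proof.

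The only mild subtlety — and the step I would be most careful about — is the bookkeeping in the last case when $q' > 3$: one must invoke both parts of Lemma~\ref{spikerep} and observe that $\{q', q'-1\}$ cannot consist of two primes, rather than chasing the $k$-bounds, which would give a weaker (and here unnecessary) conclusion. Everything else is a direct transcription of the earlier corollary proofs with $q = 3$, using that $U_{2,\ell+2} \notin \cL(q')$ is automatic once $\ell + 2 > q' + 1$, and in the exponential cases the excluded uniform minor is even larger, so it is never an obstruction here.

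\begin{proof}
	Since $\ell \ge 3$ and $k \ge 3$, Lemma~\ref{spikerep} gives $\Lambda_k \notin \cL(3)$ (as $3$ is prime and $k > 3-2$) and $\Delta_k \notin \cL(3)$ (as $3-1 = 2$ is prime and $k > 3-3$); also $U_{2,\ell+2} \notin \cL(3)$. Hence $\cL(3) \subseteq \cM$, so $\tfrac{1}{2}(3^n-1) \le h_{\cM}(n) < \infty$ for all $n$.

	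Suppose the result fails, so $h_{\cM}(n) > \frac{3^n-1}{3-1}$ for infinitely many $n$. By Theorem~\ref{main2}, $\cM$ contains $\cL^{\circ}(3)$, $\cL^{\lambda}(3)$ or $\cL(q)$ for some prime power $q > 3$. In the first two cases $\Lambda_k \in \cM$ by Lemma~\ref{spikeext}, a contradiction. In the last case, since $U_{2,\ell+2},\Lambda_k,\Delta_k \notin \cL(q)$, Lemma~\ref{spikerep} forces $q$ and $q-1$ to both be prime; but $q \ge 4$, so one of $q,q-1$ is even and greater than $2$, a contradiction.
\end{proof}
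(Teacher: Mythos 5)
Your proof is correct and takes essentially the same approach as the paper: use Theorem~\ref{main2} with $q=3$, dispose of the $\cL^{\lambda}(3)$ and $\cL^{\circ}(3)$ cases via Lemma~\ref{spikeext} ($\Lambda_k$ lies in both), and dispose of the $\cL(q)$ case by noting that $q$ and $q-1$ cannot both be prime when $q>3$, which is exactly the paper's ``either $q$ or $q-1$ is composite.'' One small slip in your prose sketch (not in the final proof block): non-representability in Lemma~\ref{spikerep} forces $k > q'-2$ and $k > q'-3$, not $k \le q'-2$ and $k \le q'-3$; since you correctly discard the $k$-bounds anyway, this does not affect the argument.
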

   \begin{proof}
   		As before, if the theorem fails, $\cM$ contains $\cL^{\lambda}(3), \cL^{\circ}(3)$ or $\cL(q)$ for some $q > 3$. In the first two cases, we have $\Lambda_k \in \cM$, and otherwise, since either $q$ or $q-1$ is composite, we have $\Lambda_k \in \cM$ or $\Delta_k \in \cM$, a contradiction. 
   \end{proof}

\section*{Acknowledgements}
I thank Jim Geelen for bringing the corollaries regarding spikes and swirls to my attention. I also thank the two referees for their very close reading of the manuscript and their useful comments.  

\section*{References}

\newcounter{refs}

\begin{list}{[\arabic{refs}]}
{\usecounter{refs}\setlength{\leftmargin}{10mm}\setlength{\itemsep}{0mm}}

\item \label{gkw09}
J. Geelen, J.P.S. Kung, G. Whittle, 
Growth rates of minor-closed classes of matroids, 
J. Combin. Theory. Ser. B 99 (2009) 420--427.

\item\label{govw02}
J. Geelen, J.G. Oxley, D. Vertigan, G. Whittle, 
Totally free expansions of matroids, 
J. Combin. Theory. Ser. B 84 (2002), 130--179. 

\item\label{line}
J. Geelen, P. Nelson, 
The number of points in a matroid with no $n$-point line as a minor, 
J. Combin. Theory. Ser. B 100 (2010), 625--630.

\item\label{dhjpaper}
J. Geelen, P. Nelson, 
A density Hales-Jewett theorem for matroids,
J. Combin. Theory. Ser. B, to appear. 

\item\label{kung91}
J.P.S. Kung,
Extremal matroid theory, in: Graph Structure Theory (Seattle WA, 1991), 
Contemporary Mathematics, 147, American Mathematical Society, Providence RI, 1993, pp.~21--61.

\item\label{nthesis}
P. Nelson, 
Exponentially Dense Matroids. 
Ph.D. Thesis, University of Waterloo, 2011. 

\item \label{oxley}
J. G. Oxley, 
Matroid Theory (2nd edition),
Oxford University Press, New York, 2011.
\end{list}

\end{document}